\numberwithin{equation}{section}
\newtheorem{thm}{Theorem}[section]
\newtheorem{lem}[thm]{Lemma}
\newtheorem{cor}[thm]{Corollary}
\newtheorem{prop}[thm]{Proposition}
\theoremstyle{definition}
\theoremstyle{remark}
\newcommand{\R}{\mathbb{R}}
\newcommand{\N}{\mathbb{N}}
\newcommand{\argmin}{\arg\min}
\DeclareMathOperator{\dive}{div}
\DeclareMathOperator{\sign}{sign}
\DeclareMathOperator{\curl}{curl}
\newenvironment{sistema}%
{\left\{\begin{array}{@{}l@{}}}{\end{array}\right.}
\patchcmd{\abstract}{\scshape\abstractname}{\textbf{\abstractname}}{}{}
\def\@makefnmark{} 
\begin{document}
\title[Monotonicity Principle in Tomography of Nonlinear Materials]{Monotonicity Principle in Tomography of Nonlinear Conducting Materials}
\author[A. Corbo Esposito, L. Faella, G. Piscitelli, R. Prakash, A. Tamburrino]{
Antonio Corbo Esposito$^1$, Luisa Faella$^1$, Gianpaolo Piscitelli$^1$, Ravi Prakash$^2$, Antonello Tamburrino$^{1,3}$}\footnote{\\$^1$Dipartimento di Ingegneria Elettrica e dell'Informazione \lq\lq M. Scarano\rq\rq, Universit\`a degli Studi di Cassino e del Lazio Meridionale, Via G. Di Biasio n. 43, 03043 Cassino (FR), Italy.\\
$^2$Departamento de Matem\'atica, Facultad de Ciencias F\'isicas y Matem\'aticas, Universidad de Concepci\'on, Avenida Esteban Iturra s/n, Bairro Universitario, Casilla 160 C, Concepci\'on, Chile.\\
$^3$Department of Electrical and Computer Engineering, Michigan State University, East Lansing, MI-48824, USA.\\
Email: antonio.corboesposito@unicas.it, luisa.faella@unicas.it, gianpaolo.piscitelli@unicas.it, rprakash@udec.cl, antonello.tamburrino@unicas.it {\it (corresponding author)}.}
\maketitle
\begin{abstract} We treat an inverse electrical conductivity problem  which deals with the reconstruction of nonlinear electrical conductivity starting from boundary measurements in steady currents operations. In this framework, a key role is played by the Monotonicity Principle,  which establishes a monotonic relation connecting the unknown material property to the (measured) Dirichlet-to-Neumann operator (DtN).
Monotonicity Principles are the foundation for a class of non-iterative and real-time imaging methods and algorithms. 

In this article, we prove that the Monotonicity Principle for the Dirichlet Energy in nonlinear problems holds under mild assumptions. Then, we show that apart from linear and $p$-Laplacian cases, it is impossible to transfer this Monotonicity result from the Dirichlet Energy to the DtN operator. To overcome this issue, we introduce a new boundary operator, identified as an Average DtN operator.
 

\noindent \textsc{\bf Keywords}: Inverse electrical conductivity problem, Nonlinearity, Monotonicity Principle, Average Dirichlet-to-Neumann operator.

\noindent\textsc{\bf MSC 2010}: 35J60, 74G75.\\
\end{abstract}

\section{Introduction} 
In this paper, we derive the Monotonicity Principle  for an inverse conductivity problem modeled by a fully nonlinear variant of the Calder\'on problem. Specifically, we treat the problem of retrieving the nonlinear electrical conductivity $\sigma$ starting from boundary measurements for stationary cases (steady currents). More precisely, we consider a fully nonlinear problem where the constitutive relationship is local, isotropic and memoryless:  
\begin{equation} \label{J}
{\bf J}(x)=\sigma(x,\vert {\bf E}(x)\vert) {\bf E}(x)\quad\forall x\in\Omega,
\end{equation}
where $\sigma$ is the nonlinear electrical conductivity, ${\bf J}$ the electric current density, ${\bf E}$ the electric field and $\Omega\subset\R^n$, $n \geq 2,$ is an open bounded domain with Lipschitz boundary. $\Omega$ represents the region occupied by the conducting material.

In steady currents operations, the electric field can be expressed through the electrical scalar potential $u\in W^{1,p}(\Omega)$ as ${\bf E}(x)=-\nabla u(x)$, where $p$ depends on the behaviour of $\sigma$. The electric scalar potential $u$ solves the steady current problem:
\begin{equation}\label{gproblem1}
\begin{cases}
\dive\Big(\sigma (x, |\nabla u(x)|) \nabla u (x)\Big) =0\ \text{in }\Omega\vspace{0.2cm}\\
u(x) =f(x)\qquad\qquad\qquad\quad\  \text{on }\partial\Omega,
\end{cases}
\end{equation}
where $f\in X_\diamond$ is the applied boundary potential, with $X_\diamond$ being an appropriate abstract trace space (see Section \ref{FoP}). The existence of a solution is guaranteed under suitable assumptions on $\sigma$ (see Section \ref{FoP}). 

Nonlinear electrical conductivities can be found in semiconducting and ceramic materials (see \cite{bueno2008sno2}). Among the applications, cable termination in high voltage (HV) and medium voltage (MV) systems \cite{boucher2018interest, lupo1996field} are well studied examples. Nonlinear electrical conductivities is also found in superconductors, key materials for applications like energy storage, magnetic levitation systems, superconducting magnets (nuclear fusion devices, nuclear magnetic resonance) and high-frequency radio technology \cite{seidel2015applied, krabbes2006high}. In all these applications, one can expect the need for nondestructive evaluation and/or imaging. For example, the Authors of \cite{lee2005nde,takahashi2014non,amoros2012effective} treated the nondestructive testing in the presence of superconductors. Nonlinear models for the electrical conductivity can be found also in the area of biological tissues (see \cite{foster1989dielectric}). For instance, \cite{corovic2013modeling} proved that nonlinear models fit the experimental data better than linear models.

We stress that problem (\ref{gproblem1}) can be applied without any relevant modification to other physical settings. For instance, in the framework of electromagnetism, both nonlinear electrostatic and nonlinear magnetostatic\footnote{$^1$In magnetostatic, it is possible to introduce a magnetic scalar potential in simply connected and source free regions.}{$^1$} phenomena can be modeled in the form of (\ref{gproblem1}). The unknown material property is the dielectric permittivity for electrostatic case whereas the magnetic permeability for magnetostatic one. In the first case the constitutive relationship is ${\bf D}(x)=\varepsilon(x,\vert {\bf E}(x)\vert) {\bf E}(x)$ (see \cite{miga2011non} and references therein, see \cite{yarali20203d}), whereas in the second case ${\bf B}(x)=\mu(x,\vert {\bf H}(x)\vert) {\bf H}(x)$ (see \cite{1993ferr.book.....B}).

In this article, we develop a theoretical contribution to the field of inverse problems with nonlinear constitutive relationships. From a general perspective, as quoted in \cite{lam2020consistency}, {\it \lq\lq\ ... the mathematical analysis for inverse problems governed by nonlinear Maxwell's equations is still in the early stages of development.\rq\rq}. One can expect that as new methods and algorithms will be available, the demand for nondestructive evaluation and imaging of nonlinear materials will eventually and significantly arise.


In  this  framework, a key role is played by the (nonlinear) Dirichlet-to-Neumann (DtN) operator, mapping
the boundary voltage $f$ to the current flux at the boundary
\begin{equation*}
\Lambda _{\sigma }: f\in X_\diamond\rightarrow  -{\bf J}\cdot \mathbf{\hat{n}}\vert_{\partial\Omega}=\sigma\,
\partial_nu|_{\partial\Omega} 
\in X_\diamond'
\end{equation*}
where $u$ is the solution of \eqref{gproblem1} with boundary data $f$, $X_\diamond'$ is the dual of $X_\diamond$ and $\mathbf{\hat{n}}$ denotes the outer unit normal on $\partial\Omega$. 

The goal of this paper is to provide a \lq\lq tool\rq\rq, the Monotonicity Principle, to reconstruct the nonlinear electrical conductivity $\sigma$ starting from the knowledge of the boundary data $\Lambda_\sigma$.

The Monotonicity Principle Method (MPM) is an imaging method which relies on a monotone relation connecting the
unknown material property to the measured DtN or its inverse. In the linear case, MPM states that
\begin{equation}\label{mondef}
\sigma \leq \tau \implies \Lambda _{\sigma} \leq \Lambda _{\tau}.
\end{equation}
where $\sigma$ and $\tau$ are two electrical conductivities defined in $\Omega$, $\Lambda _{\sigma }$ and $\Lambda _{\tau}$ are the corresponding DtN operators. In equation (\ref{mondef}), $\sigma\leq \tau$ is understood in the almost everywhere sense in $\Omega$, and $\Lambda_\sigma\leq\Lambda_\tau$ means that $\Lambda_\sigma-\Lambda_\tau$ is negative semidefinite. 
Monotonicity relation (\ref{mondef})
shows that a pointwise increase of the electrical conductivity leads to \lq\lq greater\rq\rq\ boundary data.

Monotonicity (\ref{mondef}) is the basis to develop non-iterative and real-time reconstruction methods and algorithms \cite{Tamburrino_2002,Tamburrino2006FastMF,Tamburrino_2006}. 
MPM has been mainly applied to shape reconstruction problems for detecting the shape of anomalies in a given background. In this specialization, the method determines if a test inclusion is part of the anomaly or not by a simple test. Indeed, if  $T$ is a \lq\lq test\rq\rq\ anomaly and $V$ is the unknown anomaly, corresponding to the electrical conductivities given by $\sigma_T$ and $\sigma_V$, respectively,  (\ref{mondef}) implies 
\begin{equation}\label{mondef2}
T\subseteq V \implies \Lambda _{\sigma_T} \geq \Lambda _{\sigma_V},
\end{equation}
where we have assumed that both anomalies have electrical conductivity smaller than that of the background.
Equation (\ref{mondef2}) corresponds to
\begin{equation}\label{mondef3}
 \Lambda _{\sigma_T} \ngeqslant \Lambda _{\sigma_V}\implies T \nsubseteq V. 
\end{equation}
Therefore, from the knowledge of the boundary DtN operators, we can infer if the prescribed test anomaly $T$ is contained or not in the unknown anomaly $V$. By repeating the test in (\ref{mondef3}) for various sets $T$, we can reconstruct an approximation of the shape
of the unknown anomaly $V$.

According to our awareness, the first evidence of a monotone property (\ref{mondef}) for the linear case appeared in \cite{gisser1990electric}. Then its relevance to the field of Inverse Problems was first recognized in \cite{Tamburrino_2002}, where Tamburrino and Rubinacci proposed \eqref{mondef3} to establish a new imaging method. Specifically, they (i) proved the equivalent of \eqref{mondef} but for a real-world system made by a finite number of electrodes, (ii) proposed and numerically tested the imaging method based on \eqref{mondef3} and (iii) extended (\ref{mondef}) to perfectly conducting or insulating anomalies.

Surprisingly, Monotonicity Principles, appear to be a general feature which can be found in many problems governed by PDEs of different nature. Indeed, despite originally found for elliptic PDEs arising from static problems (as, for instance, Electrical Resistance, Capacitance or Inductance Tomography) \cite{Tamburrino_2002,Tamburrino_2006,Calvano201232,Tamburrino2003233,harrach2013monotonicity}, it was also found for elliptic PDEs but arising from quasi-static problems (as, for instance, Eddy Current Tomography) \cite{Tamburrino2006FastMF,Tamburrino_2006,Tamburrino_2010}. 

Parabolic PDEs (for instance, pulsed Eddy Current Tomography) have been treated in \cite{Su_2017,Tamburrino20161,Tamburrino2015159,Su2017,Tamburrino_2016_testing}. Specifically, it was proved a Monotonicity Principle for the time constants of the natural modes.

Monotonicity of the transmission eigenvalues for the Helmoltz equation was analyzed in \cite{AT_WAVE2015}. Other Monotonicity Principles for problems governed by the Helmholtz equation were developed in \cite{harrach2019dimension, harrach2019monotonicity} for bounded domains and in \cite{griesmaier2018monotonicity,Albicker_2020} for unbounded domains. Monotonicity was also applied to crack detection for the Helmholtz equation in \cite{daimon2020monotonicity}.

Monotonicity for linear elasticity was introduced in \cite{eberle2020shape}. 
 
A special feature of MPM is that it provides rigorous upper and lower bounds to the unknown, even in the presence of noise, under proper hypothesis \cite{Tamburrino2016284}.

The limiting case of perfectly insulating anomalies has been treated in \cite{Tamburrino_2002,candiani2019monotonicity} and the case of perfectly conducting anomalies in \cite{Tamburrino_2002}. 

The concept of regularization for MPM was introduced in \cite{Rubinacci20061179,garde2017convergence}. This is relevant because MPM is not based upon the minimization of an objective function, where regularization can be easily introduced by means of penalty terms. Also, Monotonicity has been used as regularizer in \cite{harrach2016enhancing}.

A first experimental validation of MPM for Eddy Current Tomography can be found in \cite{Tamburrino201226}.

Additional numerical aspects have been studied in \cite{garde2017convergence,harrach2015resolution, harrach2018monotonicity,garde2018comparison,  garde2019regularized}.
The stability of the method has been treated and proved in \cite{harrach2013monotonicity,eberle2020shape,candiani2019monotonicity,garde2017convergence,garde2018comparison, garde2019regularized, harrach2010exact,eberle2020lipschitz}. Numerical evidence can be found in \cite{Su_2017}, whereas experimental evidence in \cite{Tamburrino201226}.

Theoretical applications of the Monotonicity can be found in the proof of uniqueness results \cite{harrach2009uniqueness, harrach2012simultaneous,harrach2017local}. 

Other than soft-field Tomography and Nondestructive Testing, MPM has been applied to homogenization of materials \cite{7559815} and concrete rebars inspection \cite{DeMagistris2007389,Rubinacci2007333}.

Monotonicity was combined with frequency-difference
and ultrasound modulated Electrical Impedance Tomography measurements, to reduce the impact of modelling errors such those arising from electrode positions and the shape of the imaging domain \cite{harrach2015combining}.


The converse of Monotonicity (\ref{mondef2}), under the assumption that the unknown anomaly consists of union of non-contractible sets, was proved by Harrach and Ullrich in \cite{harrach2013monotonicity}. This result
is relevant because it states that MPM gives exact reconstruction for (union of) contractible anomalies, at least in the ideal setting when the measured boundary data is the DtN operator. Unfortunately, this is not the case for practical systems made of a finite number of electrodes, where implication (\ref{mondef2}) holds but not its converse (see \cite{Tamburrino_2002}).

Monotonicity, but in a different form, has been treated in \cite{alessandrini1989remark}.

Eventually, it is worth noting that imaging methods based on Monotonicity Principle fall in the class of non-iterative imaging methods. Colton and Kirsch introduced the first non-iterative approach named Linear Sampling Method (LSM) \cite{Colton_1996} followed by the Factorization Method (FM) proposed by Kirsch \cite{Kirsch_1998}. Ikehata proposed the Enclosure Method \cite{ikehata1999draw,Ikehata_2000} and Devaney applied MUSIC (MUltiple SIgnal
Classification), a well known algorithm in signal processing, as imaging method \cite{Devaney2000}. 

A special case for equation $(\ref{J})$ is when $\sigma(x,\vert {\bf E}(x)\vert)=\theta(x)\vert {\bf E}(x)\vert^{p-2}$. Then the relationship between the electrical current density ${\bf J}$ and the electric field ${\bf E}$ can be written as
\begin{equation}\label{pLap}
{\bf J}(x)=\theta(x)\vert {{\bf E}(x)}\vert^{p-2} {{\bf E}(x)},
\end{equation}
where $\theta\in L^{\infty}(\Omega)$ and $\theta(x) \geq c_0$ a.e. in $\Omega$
 for some positive constant $c_0$. This leads to the study of a steady current problem involving the $p$-Laplacian. Here, briefly, we give an overview of the main and most recent results concerning the nonlinear p-Laplace type model. 
 The inverse problem of Calder\'on was initially posed in the setting of the p-conductivity
equation by Salo and Zhong \cite{doi:10.1137/110838224} and, then, studied in \cite{brander2015enclosure,brander2016calderon} also. 
In \cite{doi:10.1137/110838224}, the authors proved boundary determination results under proper regularity assumptions on the conductivity and boundary of the domain. 
Moreover, they showed that to investigate Calder\'on-type problems for equations with
weak non-linearities (see \cite{sun2004inverse,Sun_2005}), one can use the G\^{a}teaux derivative of the DtN operator at constant boundary values. The method does not work  for  p-Laplace equation  
as proved in the Appendix of \cite{doi:10.1137/110838224}.
For these Calder\'on-type problems, the Monotonicity inequality was  proved in \cite{brander2015enclosure} (see Lemma 2.1).
Here the authors study the enclosure method for non linear equation. The enclosure method introduced by Ikehata uses complex geometrical optics (CGO) solution in place of point sources (see \cite{ikehata1999draw,Ikehata_2000}).

The authors in \cite{carstea2020recovery} have treated a nonlinear problem (linear plus a nonlinear term) which is a particular case of the model proposed in this article.

In \cite{brander2016calderon}, the author proved a boundary uniqueness result for the first order normal derivative of the conductivity.
In \cite{brander2018superconductive}, the authors extended the weak DtN operator to conductivities that include regions of zero or infinite conductivity. 

The extension from the isotropic case of \cite{brander2015enclosure} to the anisotropic one has been proposed in \cite{guo2016inverse}. The authors used the Monotonicity inequality to prove injectivity for the DtN operator under a Monotonicity assumption (see Theorem 2.1 and Lemma 2.2). Their results show injectivity in two dimensions for Lipschitz conductivities. In higher dimension cases, one of the conductivities is required to be close to a constant.

In \cite{brander2018monotonicity}, a Calder\'on problem for nonlinear $p$-Laplacian type equations is studied. In this paper, the authors show that Monotonicity based shape reconstruction methods (\cite{Tamburrino_2002,Tamburrino_2006}) work in the $p$-Laplacian case which allow them to find the complex 
 hull of the inclusion without any regularity or interface jump assumption. As a matter of fact, any regularity on jump properties for the inclusion is not required and they obtain this subset using both the Monotonicity and enclosure method.   
For properties of DtN operator with $\theta=1$ in \eqref{pLap},  we refer to Hauer \cite{hauer2015p}.

The original contribution of this paper consists in deriving a Monotonicity Principle in the fully nonlinear case. This result is not at all a trivial development of the previous ones. Indeed, we prove that, in general, the Dirichlet Energy $\mathbb{F}_\sigma
\left(  u^{f} \right)$ is the quantity being monotone with respect to the electrical conductivity (Theorem \ref{monoten}), that is: 
\begin{equation} \label{m_genergy}
\sigma_1\leq\sigma_2\quad\Longrightarrow\quad\mathbb F_{\sigma_1}(u_{1}^{f}) \leq \mathbb F_{\sigma_2} (u_{2}^{f}),
\end{equation}
where $u_{1}^{f}$ and $u_{2}^{f}$ are the solutions of (\ref{gproblem1}) corresponding to $\sigma_1$ and $\sigma_2$, respectively, with $f$ being the applied boundary voltage. In (\ref{m_genergy}), $\sigma_1\le\sigma_2$ means that
\begin{equation*}
\sigma_1(x,E)\leq \sigma_2(x,E) \quad \text{for a.e.}\ x\in\overline\Omega\ \text{and}\ \forall\ E>0.
\end{equation*}

Moreover, we show that Monotonicity can be easily transferred to the boundary DtN operator, but only for linear and $p$-Laplacian problems. In these cases, we demonstrate that the DtN power product $\langle\Lambda_\sigma(f),f\rangle$ is proportional to the Dirichlet Energy and, hence, the Monotonicity for the boundary DtN operator follows.


When the nonlinearity is more general, for instance, of polynomial type, the Dirichlet Energy is monotone with respect to the constitutive relationship but it is not proportional to the power product $\langle\Lambda_\sigma(f),f\rangle$ (see Subsection \ref{connection}). Therefore, Monotonicity cannot be transferred from an \lq\lq internal\rq\rq\ quantity such as the Dirichlet Energy to boundary data like $\Lambda_\sigma$. This is a major issue since in solving inverse problem, we do not have any access to internal quantities like the Dirichlet Energy, instead, we have access to data which can only be measured from the boundary. Therefore, for the general nonlinear case, we need to \lq\lq transfer\rq\rq\ the Monotonicity Principle from the Dirichlet Energy (internal quantity) to another proper boundary operator other than $\Lambda_\sigma$.

Specifically, we introduce a new nonlinear boundary operator $\overline{\Lambda} _{\sigma }$ which is monotonic with respect to the nonlinear material property. We name $\overline{\Lambda} _{\sigma }$ as the \textit{Average DtN Operator} which is defined as
\begin{equation*}
\overline{\Lambda}_\sigma: f\in X_\diamond\mapsto \overline{\Lambda}_\sigma(f)= \int_{0}^{1}\Lambda_\sigma  \left( \alpha f\right) \text{d}\alpha\in X_\diamond'.
\end{equation*}
Our main result is the Monotonicity Principle for the operator $\overline{\Lambda} _{\sigma }$. To be more precise, we prove
\[
\sigma_1\leq\sigma_2\quad\Longrightarrow\quad\overline\Lambda_{\sigma_1}  
\leq\overline{\Lambda}_{\sigma_2},
\]
where $\overline\Lambda_{\sigma_1}  
\leq\overline{\Lambda}_{\sigma_2}$ means  $\left\langle\overline{\Lambda}_{\sigma_1}  \left( f\right) ,f \right\rangle
\leq \left\langle\overline{\Lambda}_{\sigma_2}  \left( f\right),f\right\rangle$ for any $f\in X_\diamond$.

The key factor in achieving this result is Theorem \ref{transferthm}, which deals with the transfer of Dirichlet Energy to the power product for the Average DtN operator $\overline{\Lambda}_\sigma$. More precisely, we prove
\begin{equation}\label{detrnf}
(\mathbb F_\sigma\circ\mathbb{U}_\sigma) (f)=\left\langle\overline{\Lambda}_\sigma  \left( f\right) ,f \right\rangle
\quad\forall f\in X_\diamond,
\end{equation}
where operator $\mathbb{U}_\sigma$ maps the boundary data $f$ to the corresponding solution $u^f$ of (\ref{gproblem1}), i.e.
\begin{equation*}
\mathbb{U}_\sigma:f\in X_\diamond\to u^f\in W^{1,p}(\Omega).
\end{equation*}
The proof of \eqref{detrnf} is based on a fundamental result obtained in Proposition \ref{gateauxprop}. Specifically, we prove that the G\^{a}teaux derivative operator of $\mathbb{F}_\sigma\circ\mathbb{U}_\sigma$, with respect to the boundary data $f$, is equal to the DtN operator $\Lambda_\sigma$, i.e.
\begin{equation}\label{fund_rel_der}
    \text{d}(\mathbb F_\sigma\circ\mathbb{U}_\sigma) =\Lambda_\sigma.
\end{equation}

To identify the key differences between our problem and the  $p$-Laplacian / linear ($p=2$) cases, it is worth noting that \eqref{detrnf} is replaced by
\begin{equation*}
(\mathbb F_\sigma\circ\mathbb{U}_\sigma) (f)=p^{-1} \left\langle{\Lambda}_\sigma  \left( f\right) ,f \right\rangle
\quad\forall f\in X_\diamond,
\end{equation*}
whereas \eqref{fund_rel_der} remains unchanged.

The paper is organized as follows: in Section \ref{FoP}, we describe the problem together with the preliminaries required for its analysis; in Section \ref{Gderivative} we study the behaviour of the solution of Problem (\ref{gproblem1}) and of the Dirichlet Energy with respect a variation of the boundary data; in Section \ref{MP}, we prove the main result and, eventually, in Section \ref{Concl}, we provide some important conclusions. 

\bigskip

\section{Foundations of the problem}
\label{FoP}
Throughout this paper, $\Omega$ is the region occupied by the conducting material. We assume $\Omega\subset\R^n$, $n\geq 2$, to be an open bounded domain with Lipschitz boundary.
We denote by $\mathbf{\hat{n}}$ the outer unit normal defined on $\partial\Omega$, by $\langle \cdot, \cdot\rangle$ the integral scalar product on $\partial \Omega$ and by $V$ and $S$ the $n$-dimensional and the $(n-1)$-dimensional Hausdorff measure, respectively. Moreover, we denote
\[
L^\infty_+(\Omega):=\{\theta\in L^\infty(\Omega)\ |\ \theta\geq c_0\ \text{a.e. in}\ \Omega, \ \text{for some positive constant}\ c_0\}.
\]
Then, for $1<p<+\infty$, $W^{1,p}_0(\Omega)$ is the closure set of $C_0^1(\Omega)$ with respect to the $W^{1,p}$-norm. 

Furthermore, the applied boundary voltage $f$ belongs to the abstract trace space $$X=W^{1,p}(\Omega)/W^{1,p}_0(\Omega)\approx B_{p}^{1-\frac 1p,p}(\partial\Omega),$$ that is also a Besov space (refer to \cite{JERISON1995161}, \cite[Chap. 17]{leoni17}, \cite[App.]{doi:10.1137/110838224}). In the sequel, by a little abuse of notation, we write $f\in X$ meaning that $f$ is a representative of an $X$-equivalence class, i.e. $f\in[a]_X$ where $[a]_X\in X$. We indicate that $X_\diamond$ is the set of elements in $X$ with zero average on $\partial\Omega$ with respect to the measure $S$.

\subsection{The physical problem}
Let $\sigma$ be a function representing the nonlinear electrical conductivity, i.e. ${\bf J}  (x)=  \sigma(x,E(x)){\bf E}(x)$ where ${\bf E}$ and ${\bf J}$ are the electric field and the electrical current density, respectively. In addition, let $E$ and $J$ be the magnitude of ${\bf E}$ and ${\bf J}$, respectively.

Stationary currents are governed by:
\begin{eqnarray}
\label{first_max}  \curl {\bf E}(x)& = &0\ \text{in}\ \Omega, \quad\bigintssss_{\bar x}^{\bar y} {\bf E}\cdot \mathbf{\hat{t}}\ \text{d} \ell=f(\bar x)-f(\bar y) \ \ \forall\bar x,\bar y\in\partial\Omega;\\
\label{fourth_max}  \dive{\bf J} (x)&=& 0\ \text{in}\ \Omega,\\
\label{gOhm}  {\bf J} (x)& = & \sigma(x,E(x)) \ {\bf E}(x)\ \text{in}\ \Omega,
\end{eqnarray}
where $f\in X_\diamond$ is the applied boundary voltage. Equations \eqref{first_max} and \eqref{fourth_max} come from Maxwell equations for stationary models.
We stress that equations \eqref{first_max} and \eqref{fourth_max} have to be meant in weak sense and 
 \begin{equation*}
\begin{split}
{\bf E}\in H_{\curl}(\Omega)&=\{ {\bf w} \in L^2(\Omega;\R^3) \ |\ \curl ({\bf w}) \in L^2(\Omega)\},\\
{\bf J}\in H_{\dive}(\Omega)&=\{ {\bf w} \in L^2(\Omega;\R^3) \ |\ \dive ({\bf w}) \in L^2(\Omega)\}.
\end{split}\end{equation*}
 The curvilinear integral appearing in \eqref{first_max} is well defined for ${\bf E}$ in $H_{\curl}(\Omega)$ and holds for any $C^1$-curve in $\Omega$ with extrema $\bar x$ and $\bar y\in\partial\Omega$ (see \cite{bossavit1998computational}). 
Though problem (\ref{first_max})-(\ref{gOhm}) is defined in $\R^3$, it is important to observe that all the forthcoming results hold in any dimension $n\geq 1$, once the scalar potential $u$ has been introduced.

\subsection{The Electrical Conductivity}
The well-posedness of the forward problem in Hadamard sense (see Section \ref{wpfp} below) is the minimal requirement to formulate the associated inverse problem. This objective is guaranteed by the following assumptions on $\sigma:\overline\Omega\times[0,+\infty[\to\R$:
\begin{enumerate}
\item[{\bf (H1)}] $x\in\overline\Omega\mapsto \sigma(x, E)$ is measurable $\forall E\ge0$; 
\item[{\bf (H2)}]
$E\in\ ]0,+\infty[\mapsto \sigma(x, E) E$ is strictly increasing for a.e. $x \in\overline\Omega$; 
\item[{\bf (H3)}]
$E\in\ [0,+\infty[\mapsto \sigma(x, E)$ is in $C([0,+\infty[)$ for a.e. $x \in\overline\Omega$;\\
\end{enumerate}
\vspace{-0.5cm}
\ Moreover, for a fixed $p\ge 2$, we have that:
\begin{enumerate}
\item[{\bf (H4)}] there exist three positive constants $\overline{\sigma}_2\geq \underline{\sigma}_1$ and $E_0>0$ 
such that: 
\[
\qquad \underline{\sigma}_1 \left(\frac{E}{E_0}\right)^{p-2}\leq\sigma(x, E)\leq \overline{\sigma}_2\max\left\{1,\left( \frac{E}{E_0} \right)^{p-2} \right\}\quad \text{for a.e.} \  x\in\overline\Omega\ \text{and}\ \forall E>0;
\]
\item[{\bf (H5)}]
there exists $c>0$ such that:
\[(\sigma(x,E_2){\bf E}_2-\sigma(x,E_1){\bf E}_1)\cdot( {\bf E}_2-{\bf E}_1)\geq c|{\bf E}_2-{\bf E}_1|^p\quad\text{for a.e.}\ x\in\overline\Omega\ \text{and}\ \forall\ {\bf E}_1,{\bf E}_2\in\R^n
.\]
\end{enumerate}

Parameter $p$ appearing in (H4) and (H5) is related to $W^{1,p}(\Omega)$, the functional space to which the scalar potential belongs. Assumption (H5) is a generalization of a known vector inequality \cite[Sec.12, eq (I)]{lindqvist2017notes}. Moreover, it is also used in \cite[eq. (3.4)]{lam2020consistency} for the particular case of $p=2$. We remark that (H3) does not ask for the continuous dependence of conductivity upon the space variable $x$. In other terms, one can treat materials with abrupt discontinuity of the electrical conductivity.

We stress that assumptions (H1)-(H5) are quite general (see also figure \ref{figure1}). Indeed, our objective is to generalize the $p$-Laplacian and linear cases (see \eqref{pOhm_potential} and \eqref{Ohm_potential} below). For instance, polynomial electrical conductivities
\begin{equation}
    \label{poly_sigma}
\sigma(x, E)=\sum_{k=0}^N\theta _{k}\left( {x}\right) {E}^{k}\quad \text{for a.e.} \ x\in\overline\Omega\ \text{and}\ \forall E>0
\end{equation}
obviously satisfy  (H1)-(H3). Moreover, polynomial \eqref{poly_sigma} satisfies (H4) with $p=N+2$ and $\underline\sigma_1=c_N$, where $c_N>0$ is the essential infimum of $\theta_N$ as in the definition of $L^\infty_+(\Omega)$, and $\overline\sigma_2$ is the maximum among the essential supremums of $\theta_k$, $k=0,...,N$. Hypothesis (H5) is nothing but a generalization of the standard inequality (see \cite[Sec.12, eq (I)]{lindqvist2017notes})
\begin{equation}
    \label{trivial_p}
(E_2^{k}\ {\bf E}_2-E_1^{k}\ {\bf E}_1) \cdot ({\bf E}_2-{\bf E}_1)\geq \frac 1{2^{k+1}}|{\bf E}_2-{\bf E}_1|^{k+2} \quad \forall k\geq 0.
\end{equation}
Indeed, by multiplying 
(\ref{trivial_p}) with $\theta_k(x)$ and by summing from $0$ to $N$, we have
\[
\left(\sigma(x, E_2)\ {\bf E}_2-\sigma(x, E_1){\bf E}_1\right) \cdot ({\bf E}_2-{\bf E}_1)\geq \dfrac {\theta_N(x)}{2^{N+1}}|{\bf E}_2-{\bf E}_1|^{N+2}\geq\dfrac{ c_N}{2^{N+1}}|{\bf E}_2-{\bf E}_1|^{p}
,
\]
where $c_N>0$ is the essential infimum of $\theta_N$, as aformentioned. In other terms, \eqref{poly_sigma} satisfies (H5).

\begin{figure}[!ht]
	\centering
	\includegraphics[width=0.45\textwidth]{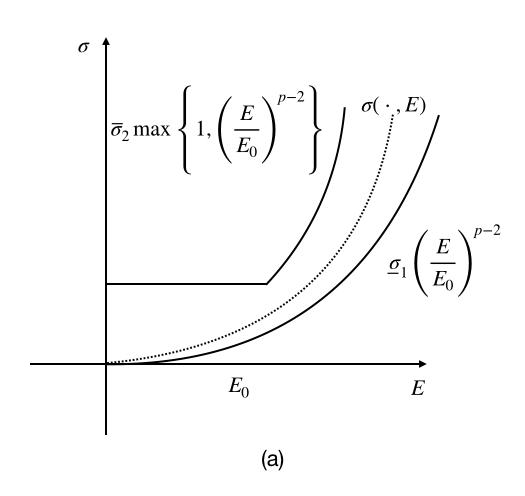}
	\includegraphics[width=0.45\textwidth]{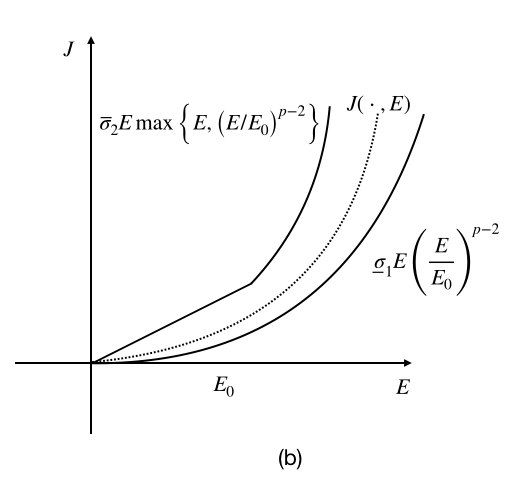}
	\caption{Impact of (H4) on the constitutive relationship in terms of electrical conductivity (a) and current density (b). Solid lines corresponds to the upper and lower constraints to either $\sigma$ or ${\bf J}$.}
	\label{figure1}
\end{figure}



In the literature, there are many practical  examples with constitutive relationships satisfying assumptions (H1)-(H5).

The monomial case, corresponding to the $p$-Laplacian, for some applicable situations is referred in \cite{doi:10.1137/110838224,brander2015enclosure,brander2016calderon,brander2018superconductive, gorb2012blow}. Polynomial constitutive relationships can be found in nonlinear inverse problems for incompressible hyperelastic materials \cite{ferreira2012uniqueness} and piezoelectric materials \cite{nakamura2009identification}. Other examples of nonlinear constitutive relationships for conductive materials complying hypotheses (H1)-(H5) are given in \cite{boucher2018interest,lupo1996field,donzel2011nonlinear}.



As a final remark, we mention some cases which cannot be treated in our framework. These cases are those of exponential constitutive relationships (see \cite{bueno2008sno2, zha2012prominent,corovic2013modeling}), the $p$-Laplacian for $1<p<2$ (see \cite{dibenedetto2012degenerate, yuan2005extinction} and reference therein) and fractional Laplacian \cite{bueno2014fractional}.

\subsection{Scalar potential and Dirichlet Energy functional}
In terms of the electrical scalar potential, that is ${\bf E}(x)=-\nabla u(x)$ with $u\in W^{1,p}(\Omega)$, the Ohm's law \eqref{gOhm} is
 \begin{equation}
 \label{gOhm_potential}
 {\bf J} (x)=- \sigma (x, |\nabla u(x)|)\nabla u(x).
 \end{equation}
Therefore, by \eqref{first_max}, \eqref{fourth_max} and \eqref{gOhm_potential}, the electrical potential $u$ 
solves the steady current problem:
 \begin{equation}\label{gproblem}
\begin{cases}
\dive\Big(\sigma (x, |\nabla u(x)|) \nabla u (x)\Big) =0\ \text{in }\Omega\vspace{0.2cm}\\
u(x) =f(x)\qquad\qquad\qquad\quad\  \text{on }\partial\Omega.
\end{cases}
\end{equation}
Here $u$ satisfies the boundary condition in the sense that $u-f\in W_0^{1,p}(\Omega)$ and we write $u|_{\partial\Omega}=f$. Specifically, problem \eqref{gproblem} is meant in the weak sense, that is
\begin{equation*}
\int_{\Omega }\sigma \left( x,| \nabla u(x) |\right) \nabla u (x) \cdot\nabla \varphi (x)\ \text{d}x=0\quad\forall\varphi\in W^{1,p}_0(\Omega).
\end{equation*}
The solution $u$ of \eqref{gproblem} is variationally characterized as
\begin{equation}\label{gminimum}
\argmin\left\{ \mathbb{F}_\sigma\left( u\right)\ :\ u\in W^{1,p}(\Omega), \ u|_{\partial\Omega}=f\right\}. 
\end{equation}
In (\ref{gminimum}), functional $\mathbb{F}_\sigma\left( u\right)$ is the Dirichlet Energy
\begin{equation}
\label{genergy}
\mathbb{F}_\sigma
\left(  u \right) = \int_\Omega Q_\sigma (x,|\nabla u(x)|)\ \text{d}x
\end{equation} 
and $Q_\sigma$ is the Dirichlet Energy density
\begin{equation}
\label{gQ}
Q_{\sigma}
\left( x,E\right)  =\int_{0}^{E} \sigma\left( x,\xi \right)\xi  \text{d}\xi\quad \text{for a.e.}\ x\in\overline\Omega\ \text{and}\ \forall E\geq0.
\end{equation}

\subsection{Well-posedness of the Forward Problem} \label{wpfp}

To obtain the existence of the solution of the forward problem (\ref{gminimum}), we use (H1), the left inequality in (H4) and a simpler version of (H2) with $\sigma(x,E)E$ being weakly  increasing. The proof follows from standard direct methods of calculus of variations (see e.g. \cite[Sec.s 4,5]{giusti2003direct}, \cite{dacorogna2014introduction}). Indeed, the convexity and coercivity of $Q_\sigma$ with respect to $E$ are the key factors. 

Uniqueness of the solution follows from (H2) that corresponds to the strict convexity of $Q_\sigma$ with respect to $E$.

The continuity of the solution of the forward problem with respect to the boundary data $f$ in a suitable norm follows from Lemma \ref{lpconv2}.

Eventually, we stress that problems, as in \eqref{gminimum}, have been broadly studied in various fields of mathematics (see e.g. \cite{della2019second,della2017sharp,piscitelli2016anonlocal} and reference therein).

\subsection{Connection among $\sigma$, $J_\sigma$ and $Q_\sigma$}

\begin{figure}[!ht]
	\centering
	\includegraphics[width=0.45\textwidth]{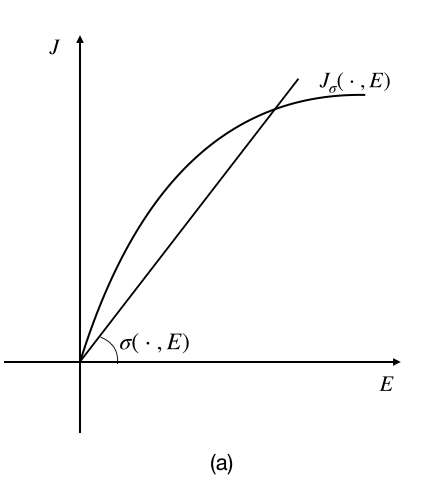}
	\includegraphics[width=0.45\textwidth]{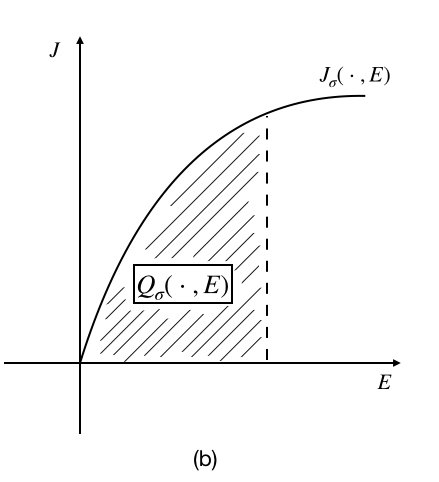}
	\caption{The nonlinear case. 
	For any given spatial point in the region $\Omega$, (a) the electrical conductivity $\sigma(\cdot,E)$ is the secant line to the graph of the function $J_\sigma(\cdot,E)$;
	(b) $Q_\sigma (\cdot, E)$ is the area of the sub-graph of $J_\sigma(\cdot, E)$.
	}
	\label{J(E)}
\end{figure}

The Ohm's law \eqref{gOhm} is isotropic and local, i.e. $\bf{J}$ is parallel to $\bf{E}$ and $J_\sigma$ depends on the position $x$ and on the magnitude of the electric field $E$ at the same location $x$.

By \eqref{gOhm_potential}, the Dirichlet Energy density can be also written as 
\begin{equation}\label{density}
Q_{\sigma}
\left( x,E \right)  =\int_{0}^{E}{J_\sigma} ({ x, \xi}) \ \text{d}{ \xi}\quad\text{for a.e.} \ x\in\overline\Omega\ \text{and}\ \forall E> 0.
\end{equation}
Relation \eqref{gOhm} gives the electrical current density as
\[
J_\sigma (x, E)=\sigma(x, E)E\quad \text{for a.e.} \ x \in\overline\Omega\ \text{and}\ \forall E>0.
\]
Moreover, relation \eqref{gQ} gives the electrical conductivity as
\[
\sigma\left( x,E\right) =E^{-1}J_\sigma (x, E)=E^{-1}\partial_E Q_\sigma
\left( x,E\right)\quad \text{for a.e.} \ x \in\overline\Omega\ \text{and}\ \forall E> 0.
\]
The electrical conductivity $\sigma(x,E)$ is the secant line to the graph of the function $J_\sigma(x,E(x))$ and $Q_\sigma (x, E(x))$ is the area of the sub-graph of $J_\sigma(x, E(x))$. For a geometric interpretation, see Figure \ref{J(E)}.

In the special case of $\sigma(x,E(x))=\theta(x)E(x)^{p-2}$, $1<p<+\infty$, the electrical current density is given by
\begin{equation}  \label{pOhm_potential}{\bf J}(x)=-\theta(x)| \nabla u(x)|^{p-2}\nabla u(x).
  \end{equation} 
This leads to the study of a steady current problem involving the $p$-Laplacian.
 \begin{figure}[!ht]
	\centering
	\includegraphics[width=0.45\textwidth]{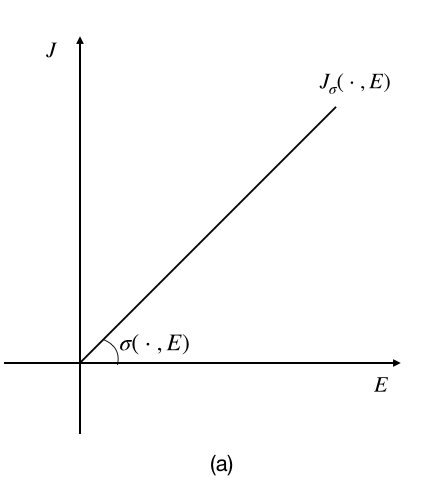}
	\includegraphics[width=0.45\textwidth]{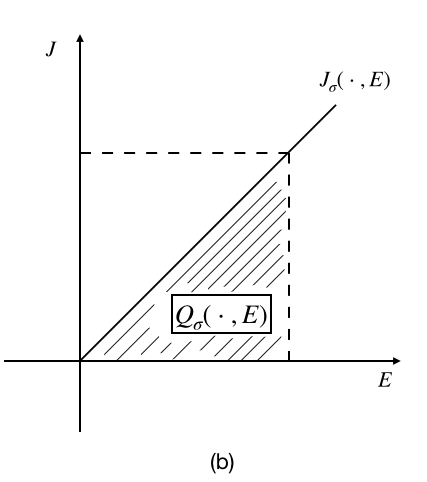}
	\caption{The linear case. For any given spatial point of the region $\Omega$, (a) the electrical conductivity is both the secant and the tangent line to the graph of the function $J_\sigma(\cdot, E)$; (b) the area of the subgraph, that is $Q_\sigma (\cdot, E)$, and the area of the super-graph are both equal to a half of the ohmic power density $J\! E$ absorbed by the system.
	}
	\label{linear}
\end{figure}
When ${\sigma}$ is independent of $E$, we have the standard linear model. More precisely, $\sigma(x,E(x))=\sigma(x)$ and consequently
 \begin{equation}  \label{Ohm_potential}{\bf J}(x)=-\sigma(x)\nabla u(x).
 \end{equation}
In this linear case, we do not need hypothesis (H3) and, since $d E^2=\frac 1 2 E \ \text{d}E$, the integral \eqref{density} is a half of the ohmic power absorbed by the system (refer to Figure \ref{linear}), related to the Joule effect:
\begin{equation*}
Q_\sigma\left( x,E (x)\right)  =\frac 12J_\sigma (x, E(x)) {E(x)}=\frac 12\sigma (x) {E(x)^2}\quad \text{for a.e.}\ x\in\overline\Omega\ \text{and}\ \forall E(x)>0.
\end{equation*}

\subsection{The DtN operator.}\label{theDtNoperator} One of the quite general ways to represent boundary measurements is by considering the Dirichlet-to-Neumann (DtN) operator
 \begin{equation*}
\Lambda_\sigma   :f\in X_\diamond\mapsto -{\bf J}^f\cdot \mathbf{\hat n}
\vert_{\partial \Omega}=\sigma\ 
\partial_nu^f|_{\partial\Omega} 
\in X_\diamond',
\end{equation*}
where $X_\diamond'$ is the dual space of $X$, ${\bf J}^f$, the current density produced by the boundary data $f$ and $u^f$, the minimizer of \eqref{gminimum}. We stress that $-{\bf J}^f\cdot \mathbf{\hat n}\vert_{\partial \Omega}$ represents the following linear functional
\begin{equation*}
\varphi \in X_\diamond \mapsto -\int_{\partial \Omega }\varphi (x)\ {\bf J}^f(x)\cdot\mathbf{\hat n}(x)\
\text{d}S.
\end{equation*}
Summing up, the DtN operator evaluated at $f$ is equal to:
\begin{equation}
\label{w-DtN}
\langle \Lambda_\sigma  \left( f\right) ,\varphi\rangle
=-\int_{\partial \Omega }\varphi (x)\ {\bf J}^f(x)\cdot\mathbf{\hat n}(x)\
\text{d}S\quad \forall\varphi \in X_\diamond.
\end{equation}

The minus sign in the definition is because we consider passive conducting material. Specifically, $-{\bf J}^f\cdot \mathbf{\hat n}$ corresponds to the current density entering the conductor through $\partial \Omega$. 
It is worth noting that the injectivity of the DtN operator is guaranteed by the assumption of zero average of $f$. Indeed, ${\bf J}^f$ is invariant up to an additive constant in $f$.

Furthermore, by testing the DtN operator \eqref{w-DtN} with the minimizer $u^f$ of \eqref{gminimum} and using a divergence Theorem, we obtain the ohmic power dissipated by the conducting material:
\begin{equation}
\label{g_formDtN}
\langle \Lambda_\sigma  \left( f\right) ,f\rangle
=\int_{\Omega } {J_\sigma}^f( x ,E(x)){ E}(x)\ \text{d}x.
\end{equation}
If $\varphi\neq f$ in \eqref{w-DtN}, we have the so-called
\emph{virtual power} product that plays an important role since it is equal to the G\^{a}teaux derivative of the Dirichlet Energy $\mathbb{F}_\sigma$ when evaluated at the solution $u^f$ (see Section \ref{Gderivative}).

When the nonlinear constitutive relation \eqref{gOhm_potential} holds, the DtN operator is
\begin{equation*}
\Lambda_\sigma  :f\in X_\diamond
\mapsto \sigma\left( x, \left\vert \nabla u^f\right\vert\right) \partial_n u^f \in X_\diamond'.
\end{equation*}%
In weak form, the DtN operator is
\begin{equation*}
\langle \Lambda_\sigma  \left( f\right) ,\varphi\rangle
=\int_{\partial \Omega }\varphi (x) {\sigma}\left( x, \left\vert \nabla u^f(x)\right\vert\right)  \partial_{{{n}}} u^f(x)\text{d}S\quad\forall \varphi\in X_\diamond.
\end{equation*}
 In the $p$-Laplacian \eqref{pOhm_potential} and in the linear \eqref{Ohm_potential} case, the DtN operators occupy the forms
 \begin{equation*}
 \langle \Lambda_\sigma  \left( f\right) ,\varphi\rangle=
 \int_{\partial \Omega }\varphi (x)\theta\left( x\right)  |\nabla u^f (x)|^{p-2} {\partial_{{{n}}} u(x)}\ \text{d}S\quad\forall \varphi\in X_\diamond
 \end{equation*}
 and 
 \begin{equation*}
 \langle \Lambda_\sigma  \left( f\right) ,\varphi\rangle=
 \int_{\partial \Omega }\varphi (x){\sigma}\left( x\right)  {\partial_{{{n}}} u^f(x)}\ \text{d}S\quad\forall \varphi\in X_\diamond,
 \end{equation*}
 respectively.

Moreover, the power product \eqref{g_formDtN} is proportional to the absorbed ohmic power, represented by the area of the dashed rectangle in figure \ref{linear}.

\section{The G\^{a}teaux derivative of the Dirichlet Energy}\label{Gderivative}
A key role in the problem, we are dealing with, is played by the G\^{a}teaux derivative of the Dirichlet Energy $\mathbb F_\sigma$ and by the G\^{a}teaux derivative of the composite function $\mathbb F_\sigma\circ \mathbb U_\sigma$, where
\begin{equation}
\label{U}
\mathbb{U}_\sigma:f\in X_\diamond\to u^f\in W^{1,p}(\Omega).
\end{equation}
Operator $\mathbb{U}_\sigma$ maps the boundary data $f$ to the solution $u^f$ of problem \eqref{gminimum}.

It is worth noting that the results of this Section are the foundations of the Monotonicity Principle in Section \ref{MP}.
Firstly, we prove a convergence result (Lemma \ref{lpconv2}), regarding $\nabla u$, i.e. the Electric field ${\bf E}$, with respect to the boundary data. Then, in Proposition \ref{pertF}, we study the G\^{a}teaux derivative of the Dirichlet Energy. As corollary, we prove that when evaluated on a physical solution, this G\^{a}teaux derivative is equal to the virtual power product (Corollary \ref{corPertF}).
Eventually, in Proposition \ref{gateauxprop}, we prove that the G\^{a}teaux derivative of the composite operator $\mathbb{F}_\sigma \circ \mathbb{U}_\sigma$ is equal to the DtN operator $\Lambda_\sigma$. This is the key result for proving the Monotonicity Principle in Section \ref{MP}.

For notation and properties of G\^{a}teaux-derivative, we refer to \cite[Chap. 4]{10.5555/59523}). Considering the definition of $X_\diamond$, we stress that many terms appearing in this section depend only on the restriction of $f$ on the boundary of $\Omega$.

\subsection{A convergence result} Firstly, we show the following useful convergence result. 
\begin{lem}
\label{lpconv2}
Let $\Omega$ be an open bounded domain with Lipschitz boundary and $f,\varphi\in X_\diamond$. Then
\[
 \nabla u^{f+\varepsilon \varphi}\to\nabla u^f\ \text{in} \ {L^p(\Omega)}\quad\text{as}\ {\varepsilon\to 0},
\]
where $u^{f+\varepsilon \varphi}\in W^{1,p}(\Omega)$ is the minimizer of \eqref{gminimum} corresponding to the  boundary data $f+\varepsilon \varphi$.
\end{lem}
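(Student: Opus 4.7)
The plan is to exploit the strong monotonicity hypothesis (H5) in the spirit of standard quasilinear PDE arguments. Set $u_\varepsilon := u^{f+\varepsilon\varphi}$ and $u := u^f$, and fix representatives of $f$ and $\varphi$ in $W^{1,p}(\Omega)$ with boundary traces $f$ and $\varphi$ respectively. The function $w_\varepsilon := u_\varepsilon - u - \varepsilon\varphi$ has zero trace and so lies in $W_0^{1,p}(\Omega)$; it is therefore an admissible test function in the weak formulation of \eqref{gproblem} for both $u_\varepsilon$ and $u$. Subtracting the two resulting identities leaves
\begin{equation*}
\int_\Omega \bigl[\sigma(x,|\nabla u_\varepsilon|)\nabla u_\varepsilon - \sigma(x,|\nabla u|)\nabla u\bigr]\cdot(\nabla u_\varepsilon-\nabla u)\,\mathrm d x = \varepsilon\int_\Omega \bigl[\sigma(x,|\nabla u_\varepsilon|)\nabla u_\varepsilon - \sigma(x,|\nabla u|)\nabla u\bigr]\cdot\nabla\varphi\,\mathrm d x.
\end{equation*}
By (H5), the LHS is bounded below by $c\,\|\nabla u_\varepsilon-\nabla u\|_{L^p(\Omega)}^{\,p}$, so the whole task reduces to showing that the RHS is $O(\varepsilon)$.

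For this I first establish a uniform bound $\|\nabla u_\varepsilon\|_{L^p(\Omega)}\le M$ for $|\varepsilon|\le 1$. Integrating the lower inequality in (H4) in \eqref{gQ} yields a coercivity estimate $\mathbb F_\sigma(v)\ge C_1\|\nabla v\|_{L^p(\Omega)}^{\,p}$, while integrating the upper inequality in (H4) yields $\mathbb F_\sigma(v)\le C_2\bigl(1+\|\nabla v\|_{L^p(\Omega)}^{\,p}\bigr)$. By the variational characterization \eqref{gminimum}, $\mathbb F_\sigma(u_\varepsilon)\le\mathbb F_\sigma(u+\varepsilon\varphi)$, and the uniform bound now follows from the triangle inequality in $W^{1,p}(\Omega)$ applied to $u+\varepsilon\varphi$.

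Having this bound, the upper estimate $|\sigma(x,E)E|\le C(1+E^{p-1})$, which is a direct consequence of (H4) for $p\ge 2$, together with H\"older's inequality with exponents $p$ and $p/(p-1)$ gives
\begin{equation*}
\left|\int_\Omega\bigl[\sigma(x,|\nabla u_\varepsilon|)\nabla u_\varepsilon - \sigma(x,|\nabla u|)\nabla u\bigr]\cdot\nabla\varphi\,\mathrm d x\right| \le C\bigl(1+\|\nabla u_\varepsilon\|_{L^p(\Omega)}^{\,p-1}+\|\nabla u\|_{L^p(\Omega)}^{\,p-1}\bigr)\|\nabla\varphi\|_{L^p(\Omega)},
\end{equation*}
which is bounded uniformly in $\varepsilon$. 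Combining this with the monotonicity estimate yields $c\,\|\nabla u_\varepsilon-\nabla u\|_{L^p(\Omega)}^{\,p}\le C\,|\varepsilon|$, whence $\nabla u_\varepsilon\to\nabla u$ in $L^p(\Omega)$ as $\varepsilon\to 0$, with the explicit rate $|\varepsilon|^{1/p}$. The main obstacle is the uniform a priori bound on $\|\nabla u_\varepsilon\|_{L^p(\Omega)}$: without it the RHS of the monotonicity identity is not controlled and the whole scheme collapses. It is precisely the two-sided growth condition (H4), coupled with the minimality of $u_\varepsilon$, that supplies this uniform control.
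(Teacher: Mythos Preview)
Your proof is correct and follows essentially the same route as the paper: use the weak formulations for $u_\varepsilon$ and $u$ with a test function of zero trace, apply (H5) to bound the left side from below by $c\|\nabla u_\varepsilon-\nabla u\|_{L^p}^p$, and control the right side via the growth bound in (H4) together with H\"older's inequality, obtaining the same rate $O(|\varepsilon|^{1/p})$. The only notable difference is in the handling of the uniform bound on $\|\nabla u_\varepsilon\|_{L^p}$: the paper passes rather quickly from $\|\nabla u^{f+\varepsilon\varphi}\|$ to $\|\nabla(f+\varepsilon\varphi)\|$, while you spell out the underlying energy comparison (coercivity and growth from (H4) plus minimality $\mathbb F_\sigma(u_\varepsilon)\le\mathbb F_\sigma(u+\varepsilon\varphi)$), which makes that step more transparent.
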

\begin{proof}For a fixed $|\varepsilon|<1$, using the divergence Theorem, we have\footnote{$^2$We  remind that $f$ and $\varphi$ are representatives of their equivalence classes in $X_\diamond$.}{$^2$}
\begin{equation}\label{Idiff}
\begin{split}
I:=\int_\Omega  {\sigma}\left( x, \left\vert \nabla u^{f+\varepsilon \varphi}(x)\right\vert\right)  {\nabla u^{f+\varepsilon \varphi}(x)}\cdot(  \nabla u^{f+\varepsilon \varphi}(x)- \nabla u^{f}(x))\text{d}x=\\=\varepsilon
\int_{\partial \Omega } \varphi(x) {\sigma}\left( x, \left\vert \nabla u^{f+\varepsilon \varphi}(x)\right\vert\right)  \partial_n u^{f+\varepsilon \varphi}(x)\text{d}S,
\end{split}
\end{equation}
\begin{equation}\label{IIdiff}
\begin{split}
II:=\int_\Omega  {\sigma}\left( x, \left\vert \nabla u^{f}(x)\right\vert\right)  {\nabla u^{f}(x)}\cdot(  \nabla u^{f+\varepsilon \varphi}(x)- \nabla u^{f}(x))\text{d}x=\\
=\varepsilon
\int_{\partial \Omega} \varphi(x) {\sigma}\left( x, \left\vert \nabla u^{f}(x)\right\vert\right)  \partial_n u^{f}(x)\text{d}S.
\end{split}
\end{equation}
By subtracting \eqref{IIdiff} from \eqref{Idiff} and using assumption (H5), we get a positive constant $c>0$ such that
\begin{equation}\label{lowconv}
I-II\geq c ||\nabla u^{f+\varepsilon \varphi}-\nabla u^f||^p_{p}.
\end{equation}
On the other hand, from (H4), we know that $\sigma(x,E)\leq \overline\sigma_2\max\left\{1,\left(\frac{E}{E_0}\right)^{p-2}\right\}$ and, therefore, by H\"older inequality, we have
\begin{equation*}
\begin{split}
I-II & =\varepsilon\int_{\partial \Omega } \varphi(x) \left[{\sigma}\left( x, \left\vert \nabla u^{f+\varepsilon \varphi}(x)\right\vert\right)  \partial_n u^{f+\varepsilon \varphi}(x)- {\sigma}\left( x, \left\vert \nabla u^{f}(x)\right\vert\right)  \partial_n u^{f}(x)\right]\text{d}S\\
& = \varepsilon\int_\Omega\nabla \varphi(x)\cdot\left[  {\sigma}\left( x, \left\vert \nabla u^{f+\varepsilon \varphi}(x)\right\vert\right)  {\nabla u^{f+\varepsilon \varphi}(x)}- {\sigma}\left( x, \left\vert \nabla u^{f}(x)\right\vert\right)  {\nabla u^{f}(x)}\right]\ \text{d}x\\
& \leq\varepsilon \overline\sigma_2\int_\Omega| \nabla \varphi(x)| \left[ \max\left\{\left\vert \nabla u^{f+\varepsilon \varphi}(x)\right\vert ,\frac{\left\vert \nabla u^{f+\varepsilon \varphi}(x)\right\vert^{p-1}}{E_0^{p-2}}\right\}\right. +\\
&\qquad\qquad\qquad\qquad\qquad\qquad\qquad\left.\max\left\{ \left\vert \nabla u^{f}(x)\right|,\frac{\left\vert \nabla u^{f}(x)\right\vert^{p-1}}{E_0^{p-2}} \right\} \right]\ \text{d}x.
\end{split}
\end{equation*}
Moreover, we have
\begin{equation}\label{uppconv}
\begin{split}
I-II
&\leq \varepsilon C\left[\max\{\left|\left|\nabla \varphi\right|\right|_2 \left\vert\left\vert \nabla u^{f+\varepsilon \varphi}\right\vert\right\vert_2, \left|\left|\nabla \varphi\right|\right|_p \left\vert\left\vert \nabla u^{f+\varepsilon \varphi}\right\vert\right\vert_p^{p-1}\}\right.\\
&\left.\qquad\qquad\qquad\qquad+\max\{\left|\left|\nabla \varphi\right|\right|_2 \left\vert\left\vert \nabla u^{f}\right\vert\right\vert_2, \left|\left|\nabla \varphi \right|\right|_p \left\vert\left\vert \nabla u^{f}\right\vert\right\vert_p^{p-1} \} \right]\\
&\leq \varepsilon C\left[\max\{\left|\left|\nabla \varphi\right|\right|_2 \left\vert\left\vert \nabla {f+\varepsilon\nabla \varphi}\right\vert\right\vert_2, \left|\left|\nabla \varphi\right|\right|_p \left\vert\left\vert \nabla {f+\varepsilon\nabla \varphi}\right\vert\right\vert_p^{p-1}\}\right.\\
&\left.\qquad\qquad\qquad\qquad+\max\{\left|\left|\nabla \varphi\right|\right|_2 \left\vert\left\vert \nabla {f}\right\vert\right\vert_2, \left|\left|\nabla \varphi \right|\right|_p \left\vert\left\vert \nabla {f}\right\vert\right\vert_p^{p-1} \} \right]\\
&\leq \varepsilon C\left[\max\left\{\left|\left|\nabla \varphi\right|\right|_2 (\left\vert\left\vert \nabla f\right|\right|_2+ \left\vert\left\vert\nabla \varphi\right\vert\right\vert_2), \left|\left|\nabla \varphi\right|\right|_p( \left\vert\left\vert \nabla f\right|\right|_p^{p-1}+ \left\vert\left\vert\nabla \varphi\right\vert\right\vert_p^{p-1}) \right\}\right.\\
&\left.\qquad\qquad\qquad\qquad+\max\{\left|\left|\nabla \varphi\right|\right|_2 \left\vert\left\vert \nabla {f}\right\vert\right\vert_2, \left|\left|\nabla \varphi \right|\right|_p \left\vert\left\vert \nabla {f}\right\vert\right\vert_p^{p-1} \} \right].\\
\end{split}
\end{equation}
The quantity in the squared bracket is finite and hence, by \eqref{lowconv} and \eqref{uppconv}, 
we get
\begin{equation*}
||\nabla u^{f+\varepsilon \varphi}-\nabla u^f||^p_{p}\leq C\varepsilon,
\end{equation*}
where $C$ is a positive constant independent of $\varepsilon$.
Therefore, the conclusion follows by passing the limit $\varepsilon\to 0$.
\end{proof}

\subsection{The perturbation of the Dirichlet Energy with respect to the minimizer}
Before proving the main result, we observe that (H2) gives the convexity of $Q_\sigma(x,E)$ with respect to $E$ and, since $Q_\sigma(x,0)=0$, then $\partial_E Q_\sigma(x,E)$ is increasing with respect to $E>0$ for a.e. $x\in\overline\Omega$. This monotonic behaviour of $\partial_E Q_\sigma(x,E)$ leads to the following inequalities for a.e. $x\in\overline\Omega$:
\begin{equation}
    \label{pconv}
    \begin{split}
0\leq \partial_E Q_\sigma (x, E_1)(E_2-E_1)& \leq Q_\sigma(x, E_2)-Q_\sigma(x, E_1)\\&\leq \partial_E Q_\sigma (x, E_2)(E_2-E_1)\quad\text{for any}\ 0<E_1\leq E_2.
    \end{split}
\end{equation}
Now, we study the G\^{a}teaux-derivative of \eqref{genergy}. 
\begin{prop}\label{pertF} Let $\Omega$ be a bounded connected domain with Lipschitz boundary and $u,\varphi\in W^{1,p}(\Omega)$. 
Then
\begin{align}
\label{dFlib}
\text{\emph d}\mathbb F_\sigma (u;\varphi)&=\mathbb F_\sigma'(u)\varphi=
\int_\Omega  {\sigma}\left( x, \left\vert \nabla u(x)\right\vert\right)  \nabla u(x)\cdot \nabla \varphi (x)\ \text{\emph d}x.
\end{align}
\end{prop}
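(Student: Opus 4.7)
The plan is to compute $\text{d}\mathbb F_\sigma(u;\varphi)$ by differentiating under the integral sign. Writing the difference quotient as
\[
\frac{\mathbb F_\sigma(u+\varepsilon\varphi)-\mathbb F_\sigma(u)}{\varepsilon}=\int_\Omega \frac{Q_\sigma(x,|\nabla u+\varepsilon\nabla\varphi|)-Q_\sigma(x,|\nabla u|)}{\varepsilon}\,\text{d}x,
\]
I need two ingredients: (i) the pointwise a.e.\ limit of the integrand as $\varepsilon\to 0$, and (ii) an $\varepsilon$-independent dominating function in $L^1(\Omega)$ to invoke Lebesgue's dominated convergence theorem.

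For (i), hypothesis (H3) makes $E\mapsto\sigma(x,E)$ continuous, so $Q_\sigma(x,\cdot)\in C^1([0,+\infty[)$ with $\partial_E Q_\sigma(x,E)=\sigma(x,E)E$ directly from definition \eqref{gQ}. For $\nabla u(x)\ne 0$, the chain rule gives
\[
\left.\frac{\text{d}}{\text{d}\varepsilon}Q_\sigma(x,|\nabla u+\varepsilon\nabla\varphi|)\right|_{\varepsilon=0}=\sigma(x,|\nabla u(x)|)\,\nabla u(x)\cdot\nabla\varphi(x).
\]
At points where $\nabla u(x)=0$, the right-hand side vanishes since (H4) with $p\ge 2$ gives $|\sigma(x,|\eta|)\eta|\le\overline\sigma_2\max\{|\eta|,|\eta|^{p-1}/E_0^{p-2}\}\to 0$ as $|\eta|\to 0$; a direct check using $Q_\sigma(x,|\eta|)=O(|\eta|^2+|\eta|^p)$ shows the difference quotient also vanishes in the limit, so the same formula holds a.e.

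For (ii), the crucial tool is convexity. By (H2) the derivative $\partial_E Q_\sigma(x,E)=\sigma(x,E)E$ is non-decreasing, so $Q_\sigma(x,\cdot)$ is convex and non-decreasing on $[0,+\infty[$; composing with the convex norm $\eta\mapsto|\eta|$ shows that $\eta\mapsto Q_\sigma(x,|\eta|)$ is convex. Consequently $\varepsilon\mapsto Q_\sigma(x,|\nabla u+\varepsilon\nabla\varphi|)$ is a convex scalar function of $\varepsilon$, and its difference quotient at $0$ is monotone non-decreasing in $\varepsilon$. For $|\varepsilon|\le 1$ this quotient is therefore dominated in absolute value by
\[
\bigl|Q_\sigma(x,|\nabla u+\nabla\varphi|)-Q_\sigma(x,|\nabla u|)\bigr|+\bigl|Q_\sigma(x,|\nabla u-\nabla\varphi|)-Q_\sigma(x,|\nabla u|)\bigr|,
\]
which, by integrating the upper bound in (H4), is controlled by a constant times $(|\nabla u|+|\nabla\varphi|)^p+(|\nabla u|+|\nabla\varphi|)^2$. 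Since $u,\varphi\in W^{1,p}(\Omega)$, $p\ge 2$ and $\Omega$ is bounded (so $L^p\hookrightarrow L^2$), this bound lies in $L^1(\Omega)$.

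Applying dominated convergence then yields \eqref{dFlib}. The main obstacle is step (ii): one cannot directly bound the difference quotient by $\sup_\varepsilon|\sigma(x,|\nabla u+\varepsilon\nabla\varphi|)(\nabla u+\varepsilon\nabla\varphi)\cdot\nabla\varphi|$ in a useful way, because this still depends on $\varepsilon$. The convexity of $\eta\mapsto Q_\sigma(x,|\eta|)$ is the trick that reduces the estimate to the two $\varepsilon$-free reference evaluations at $\varepsilon=\pm 1$, and it is here that both (H2) and the growth condition (H4) play essential roles.
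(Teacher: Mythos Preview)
Your proof is correct and follows essentially the same strategy as the paper: write the difference quotient, obtain an $\varepsilon$-independent $L^1$ bound on the integrand via the convexity of $Q_\sigma$ (from (H2)) and the growth bound (H4), then apply dominated convergence and compute the pointwise limit using $Q_\sigma(x,\cdot)\in C^1$. The only cosmetic difference is in how convexity is exploited for the domination: the paper uses the gradient inequality $|Q_\sigma(x,E_2)-Q_\sigma(x,E_1)|\le \partial_E Q_\sigma(x,\max\{E_1,E_2\})\,|E_2-E_1|$ together with the triangle inequality to arrive directly at the bound $\sigma(x,|\nabla u|+|\nabla\varphi|)(|\nabla u|+|\nabla\varphi|)|\nabla\varphi|$, whereas you use the monotonicity of the difference quotient of the convex map $\varepsilon\mapsto Q_\sigma(x,|\nabla u+\varepsilon\nabla\varphi|)$ and evaluate at $\varepsilon=\pm 1$; both yield equivalent $L^1$ majorants. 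Your closing remark that one ``cannot directly bound'' by the supremum of the derivative is thus a bit overstated---that is precisely what the paper does, using the monotonicity of $E\mapsto\sigma(x,E)E$ to remove the $\varepsilon$-dependence.
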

\begin{proof} 
For any $\varepsilon\in\R$, we have 
\begin{equation*}
\begin{split}
\mathbb F_\sigma(u)&=\int_\Omega Q_\sigma(x, |\nabla u(x)|)\text{d}x,\\
\mathbb F_\sigma(u+\varepsilon\varphi)&= \int_\Omega Q_\sigma\left(x, \left|\nabla {u}(x)+\varepsilon\nabla \varphi(x)\right|\right)\text{d}x.
\end{split}
\end{equation*}
Let us study the incremental ratio:
\begin{equation}
    \label{rapp_incr}
\frac{\mathbb F_\sigma(u+\varepsilon\varphi)-\mathbb F_\sigma (u)}\varepsilon= \frac 1{ \varepsilon }\int_\Omega 
Q_\sigma\left(x,|\nabla {u}(x)+\varepsilon \nabla \varphi(x)|\right)-Q_\sigma\left(x, |\nabla u(x)|
\right)\text{d}x.
\end{equation}
The magnitude of the integrand function can be bounded easily from above. Indeed, by \eqref{pconv}, we have
\begin{equation}
\label{upp}
\begin{split}
& \left| \frac {
Q_\sigma\left(x,|\nabla {u}(x)+\varepsilon \nabla \varphi(x)|\right)-Q_\sigma\left(x, |\nabla u(x)|
\right)}\varepsilon\right|\\
&\qquad\leq\frac 1{| \varepsilon|} \sigma \left(x,\left| \nabla {u}(x)|+|\varepsilon \nabla \varphi(x)\right|\right)(\left|\nabla u(x)|+|\varepsilon \nabla \varphi(x)\right|)|\varepsilon\nabla \varphi(x)| \\
&\qquad\leq \sigma \left(x,\left| \nabla u(x)|+| \nabla \varphi(x)\right|\right)(\left|\nabla u(x)|+| \nabla \varphi(x)\right|)|\nabla \varphi(x)|, 
\end{split}
\end{equation} 
for any $|\varepsilon| <1$.
By assumption (H4), the last term in \eqref{upp} is a $L^1$ function and hence, by Lebesgue Dominate Convergence Theorem, we can pass to the limit in \eqref{rapp_incr}, as $\varepsilon\to 0$. Then, by assumption (H2), we have that $Q_\sigma(x, \cdot)$ is in $C^1([0,+\infty[)$ for a.e $x\in\overline\Omega$ and hence
\begin{equation}\label{rapp_incr_int}
\begin{split}
&\frac{Q_\sigma\left(x,|\nabla u(x)+\varepsilon \nabla \varphi(x)|\right)-Q_\sigma\left(x, |\nabla u(x)|\right)}\varepsilon\\
&\qquad\qquad=\sigma \left(x,\xi_\varepsilon \right)\xi_\varepsilon \frac{ \left|\nabla u(x)+\varepsilon \nabla \varphi(x)\right|-\left|\nabla u(x)\right|}\varepsilon\\
\end{split}
\end{equation}
with
\begin{equation}
\label{interval}
\xi_\varepsilon\in [\min\{\left|\nabla u(x)\right|, \left|\nabla u+\varepsilon \nabla \varphi(x)\right|\},\max\{\left|\nabla u(x)\right|,\left|\nabla u+\varepsilon \nabla \varphi(x)\right|\}],
\end{equation}
where the dependence of $\xi_\varepsilon$ upon $x$ is apparent.

The last term in \eqref{rapp_incr_int} is equal to
\begin{equation}\label{lim_e}
\begin{sistema}
\sigma \left(x,\xi_\varepsilon \right)\xi_\varepsilon \dfrac{2\nabla u(x)\cdot\nabla \varphi (x)+\varepsilon \left|\nabla\varphi(x)\right|^2}{|\nabla u(x)|+|\nabla u(x)+\varepsilon \nabla \varphi(x)||},\quad\text{if}\ |\nabla u(x)|\neq 0,\vspace{0.2cm}\\
\sigma \left(x,\xi_\varepsilon \right)\xi_\varepsilon \sign(\varepsilon)|\nabla\varphi (x)|,\qquad\qquad\qquad\quad\ \text{otherwise}.
\end{sistema}
\end{equation}
Hence, since \eqref{interval} holds, $\xi_\varepsilon\to |\nabla u(x)|$ as $\varepsilon\to 0$ for a. e $x\in\overline\Omega$. We pass to the limit as $\varepsilon\to 0$ in \eqref{lim_e}, \eqref{rapp_incr_int} and \eqref{rapp_incr} to obtain \eqref{dFlib}.
\end{proof}
Proposition \ref{pertF} implies the following corollary by replacing $u$ with $u^f$, the minimizer of \eqref{gminimum}.
\begin{cor} \label{corPertF}
Let $\Omega$ be a bounded connected domain with Lipschitz boundary, $f\in X_\diamond$ and $\varphi\in W^{1,p}(\Omega)$. Then
\begin{align}
\label{dF}
\text{\emph d}\mathbb F_\sigma (u^f;\varphi)&=\mathbb F_\sigma'(u^f)\varphi=
\langle\Lambda_\sigma (f),\varphi\rangle,
\end{align}
where $u^{f}$ is the minimizer of \eqref{gminimum} corresponding to the boundary data ${f}$. 
\end{cor}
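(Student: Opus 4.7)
The statement is a direct consequence of Proposition~\ref{pertF} specialized at $u=u^{f}$, combined with the Euler--Lagrange equation satisfied by the minimizer and an integration by parts that converts the resulting bulk integral into the boundary duality pairing defining $\Lambda_\sigma$ via \eqref{w-DtN}.

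\textbf{Execution.} First, since $u^{f}\in W^{1,p}(\Omega)$, Proposition~\ref{pertF} applies verbatim and yields
\begin{equation*}
\text{d}\mathbb{F}_\sigma(u^{f};\varphi)=\int_\Omega \sigma(x,|\nabla u^{f}(x)|)\,\nabla u^{f}(x)\cdot\nabla \varphi(x)\,\text{d}x.
\end{equation*}
Second, I would split $\varphi=\tilde\varphi+\psi$, where $\tilde\varphi\in W^{1,p}(\Omega)$ is any lift with the same boundary trace as $\varphi$ and $\psi:=\varphi-\tilde\varphi\in W^{1,p}_0(\Omega)$. Since $u^{f}$ is the minimizer of \eqref{gminimum}, the weak form of \eqref{gproblem} tested with $\psi$ gives
\begin{equation*}
\int_\Omega \sigma(x,|\nabla u^{f}|)\,\nabla u^{f}\cdot\nabla\psi\,\text{d}x=0,
\end{equation*}
so $\text{d}\mathbb{F}_\sigma(u^{f};\varphi)=\text{d}\mathbb{F}_\sigma(u^{f};\tilde\varphi)$. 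Third, since the vector field $\mathbf{J}^{f}=-\sigma(x,|\nabla u^{f}|)\nabla u^{f}$ belongs to $L^{p'}(\Omega;\R^n)$ by (H4) with $p'=p/(p-1)$, and is divergence-free in the distributional sense by \eqref{gproblem}, the $H_{\dive}$-type divergence theorem applied to $\mathbf{J}^{f}$ and $\tilde\varphi$ gives
\begin{equation*}
\int_\Omega \sigma(x,|\nabla u^{f}|)\,\nabla u^{f}\cdot\nabla\tilde\varphi\,\text{d}x=-\int_{\partial\Omega}\tilde\varphi\,\mathbf{J}^{f}\cdot\mathbf{\hat n}\,\text{d}S.
\end{equation*}
By the definition \eqref{w-DtN}, the right-hand side is precisely $\langle\Lambda_\sigma(f),\tilde\varphi\rangle$, which coincides with $\langle\Lambda_\sigma(f),\varphi\rangle$ because the pairing depends only on the boundary trace of the test function. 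Chaining the three identities yields \eqref{dF}.

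\textbf{The main obstacle.} The only delicate point is making the divergence theorem rigorous for a merely $L^{p'}$ vector field that is divergence-free only in the weak sense, tested against a general $W^{1,p}(\Omega)$ function. This amounts to interpreting $\mathbf{J}^{f}\cdot\mathbf{\hat n}$ as an element of the dual trace space $X_\diamond'$, which is the standard normal-trace theory for $H_{\dive}$-type fields over Lipschitz domains (Lipschitz regularity of $\partial\Omega$ and $p$-$p'$ duality are used here). A related technicality is that the DtN pairing is originally defined in \eqref{w-DtN} for $\varphi\in X_\diamond$, whereas here $\varphi\in W^{1,p}(\Omega)$ has arbitrary trace; this is harmless because $\int_{\partial\Omega}\mathbf{J}^{f}\cdot\mathbf{\hat n}\,\text{d}S=0$ (apply the divergence theorem to $\mathbf{J}^{f}$ with test function $\equiv 1$), so the boundary pairing is insensitive to additive constants on the trace and extends unambiguously to $W^{1,p}(\Omega)$.
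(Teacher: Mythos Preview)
Your proof is correct and follows essentially the same route as the paper, which simply states that the corollary follows from Proposition~\ref{pertF} by substituting $u=u^{f}$; the identification of the resulting bulk integral with $\langle\Lambda_\sigma(f),\varphi\rangle$ is treated there as immediate from the weak definition of the DtN operator and the divergence theorem already invoked in Section~\ref{theDtNoperator}. Your version is more explicit, carefully spelling out the trace-only dependence via the $W^{1,p}_0$ splitting, the $H_{\dive}$ normal-trace argument, and the extension of the pairing beyond $X_\diamond$ using $\int_{\partial\Omega}\mathbf{J}^f\cdot\mathbf{\hat n}\,\text{d}S=0$---all of which are correct and useful clarifications of points the paper leaves tacit.
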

 It is worth noting that also the G\^{a}teaux derivative $\text{d}\mathbb F_\sigma (u^f;\varphi)$ appearing in \eqref{dF} depends only on the restriction of $\varphi$ on the boundary of $\Omega$.

\subsection{The perturbation of $\mathbb F_\sigma$ with respect to the boundary values} In this section, we analyze the G\^{a}teaux derivative operator for the composition of the Dirichlet Energy functional $\mathbb F_\sigma$ and the operator $\mathbb U_\sigma$, defined in \eqref{U}. 
\begin{prop} \label{gateauxprop} Let $\Omega$ be a bounded connected domain with Lipschitz boundary and $f\in X_\diamond$. Then $\text{\emph d} (\mathbb F_\sigma\circ\mathbb{U_\sigma})=(\mathbb F_\sigma\circ \mathbb U_\sigma)'=\Lambda_\sigma$, i.e.
\begin{equation}
    \label{dG}
    \text{\emph d}(\mathbb F_\sigma\circ\mathbb{U_\sigma}) (f;\varphi)=(\mathbb F_\sigma\circ \mathbb U_\sigma)'(f)\varphi=\langle\Lambda_\sigma (f),\varphi\rangle \quad \forall \varphi\in X_\diamond.
\end{equation}
\end{prop}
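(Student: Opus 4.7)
The strategy is to prove \eqref{dG} via a squeeze argument that exploits the variational characterization of $u^f$ and $u^{f+\varepsilon\varphi}$ as minimizers of $\mathbb{F}_\sigma$ subject to their respective Dirichlet data, and then uses Lemma \ref{lpconv2} together with Proposition \ref{pertF} to pass to the limit.

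Fix $f,\varphi\in X_\diamond$ and identify $\varphi$ with any $W^{1,p}(\Omega)$-extension (the analysis is insensitive to the choice, since $\mathbb{U}_\sigma$ depends only on the boundary trace). For $\varepsilon>0$ small, $u^f+\varepsilon\varphi$ is admissible for the minimization problem with boundary datum $f+\varepsilon\varphi$, while $u^{f+\varepsilon\varphi}-\varepsilon\varphi$ is admissible for the problem with boundary datum $f$. Minimality \eqref{gminimum} thus gives $\mathbb{F}_\sigma(u^{f+\varepsilon\varphi})\le\mathbb{F}_\sigma(u^f+\varepsilon\varphi)$ and $\mathbb{F}_\sigma(u^f)\le\mathbb{F}_\sigma(u^{f+\varepsilon\varphi}-\varepsilon\varphi)$, whence, after dividing by $\varepsilon$, the sandwich
\begin{equation*}
\frac{\mathbb{F}_\sigma(u^{f+\varepsilon\varphi})-\mathbb{F}_\sigma(u^{f+\varepsilon\varphi}-\varepsilon\varphi)}{\varepsilon}\le\frac{\mathbb{F}_\sigma(u^{f+\varepsilon\varphi})-\mathbb{F}_\sigma(u^f)}{\varepsilon}\le\frac{\mathbb{F}_\sigma(u^f+\varepsilon\varphi)-\mathbb{F}_\sigma(u^f)}{\varepsilon}.
\end{equation*}
For $\varepsilon<0$ the two outer ratios exchange their roles but still bracket the central one.

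The right-hand ratio converges to $\text{d}\mathbb{F}_\sigma(u^f;\varphi)=\langle\Lambda_\sigma(f),\varphi\rangle$ directly by Proposition \ref{pertF} and Corollary \ref{corPertF}. For the left-hand ratio I set $h_\varepsilon:=u^{f+\varepsilon\varphi}-\varepsilon\varphi$, so that $\nabla h_\varepsilon\to\nabla u^f$ in $L^p(\Omega)$ by Lemma \ref{lpconv2}. Proposition \ref{pertF} shows that $s\mapsto\mathbb{F}_\sigma(h_\varepsilon+s\varepsilon\varphi)$ is $C^1$ on $[0,1]$, and the Fundamental Theorem of Calculus gives
\begin{equation*}
\frac{\mathbb{F}_\sigma(h_\varepsilon+\varepsilon\varphi)-\mathbb{F}_\sigma(h_\varepsilon)}{\varepsilon}=\int_0^1\!\!\int_\Omega \sigma(x,|\nabla h_\varepsilon+s\varepsilon\nabla\varphi|)\,(\nabla h_\varepsilon+s\varepsilon\nabla\varphi)\cdot\nabla\varphi\,\text{d}x\,\text{d}s.
\end{equation*}

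The main obstacle is the passage to the limit $\varepsilon\to 0$ in this last integral. Since $\nabla h_\varepsilon+s\varepsilon\nabla\varphi\to\nabla u^f$ in $L^p(\Omega)$ uniformly in $s\in[0,1]$, combining the continuity of $\sigma(x,\cdot)$ from (H3) with the growth bound (H4), which dominates the integrand by a constant multiple of $(1+|\nabla h_\varepsilon|^{p-1}+|\nabla\varphi|^{p-1})|\nabla\varphi|$, enables a Vitali/generalized dominated convergence argument: extract an a.e.\ convergent subsequence with an $L^p$-convergent dominating envelope, pass to the pointwise limit, and invoke uniqueness of the limit to conclude convergence of the whole family. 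This yields
\begin{equation*}
\lim_{\varepsilon\to 0}\int_0^1\!\!\int_\Omega\sigma(x,|\nabla h_\varepsilon+s\varepsilon\nabla\varphi|)\,(\nabla h_\varepsilon+s\varepsilon\nabla\varphi)\cdot\nabla\varphi\,\text{d}x\,\text{d}s=\int_\Omega\sigma(x,|\nabla u^f|)\nabla u^f\cdot\nabla\varphi\,\text{d}x,
\end{equation*}
which equals $\langle\Lambda_\sigma(f),\varphi\rangle$ by Corollary \ref{corPertF}. The squeeze then delivers \eqref{dG}, and the analogous computation for $\varepsilon<0$ concludes the proof.
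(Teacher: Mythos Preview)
Your proof is correct and follows essentially the same squeeze strategy as the paper: both use minimality of $u^f$ and $u^{f+\varepsilon\varphi}$ to sandwich the incremental ratio, handle the upper bound via Proposition~\ref{pertF}/Corollary~\ref{corPertF}, and handle the lower bound by invoking Lemma~\ref{lpconv2} together with a subsequence-plus-domination argument under (H3)--(H4). The only noteworthy difference is cosmetic: for the lower bound the paper works directly with the difference $Q_\sigma(x,|\nabla u^{f+\varepsilon\varphi}|)-Q_\sigma(x,|\nabla u^{f+\varepsilon\varphi}-\varepsilon\nabla\varphi|)$ via a mean-value expansion, whereas you rewrite it through the Fundamental Theorem of Calculus and Proposition~\ref{pertF} as $\int_0^1 \text{d}\mathbb{F}_\sigma(h_\varepsilon+s\varepsilon\varphi;\varphi)\,\text{d}s$; this is a clean repackaging but requires the extra (easy) observation that $s\mapsto \text{d}\mathbb{F}_\sigma(h_\varepsilon+s\varepsilon\varphi;\varphi)$ is continuous, which you should justify explicitly from (H3)--(H4) rather than attribute to Proposition~\ref{pertF} alone.
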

\begin{proof} For sake of simplicity, we set
\begin{equation*}
\mathbb G_\sigma(f):=(\mathbb F_\sigma\circ\mathbb{U}_\sigma)(f)=\mathbb{F}_\sigma\left(  u^f \right).
\end{equation*}
For any $\varepsilon>0$, we consider
\begin{equation*}
\begin{split}
\mathbb G_\sigma(f)&=\int_\Omega Q_\sigma(x, |\nabla u^f(x)|)\text{d}x,\\
\mathbb G_\sigma(f+\varepsilon\varphi)&= \int_\Omega Q_\sigma\left(x, \left|\nabla u^{f+\varepsilon\varphi}(x)\right|\right)\text{d}x\leq \int_\Omega Q_\sigma\left(x, \left|\nabla u^{f}(x)+\varepsilon\nabla \varphi(x)\right|\right)\text{d}x. 
\end{split}
\end{equation*}
Let us consider the incremental ratio $\left[\mathbb G_\sigma(f+\varepsilon\varphi)-\mathbb G_\sigma (f)\right]/\varepsilon$. On one hand, we have
\begin{equation}
    \label{rapp_incrG}
\frac{\mathbb G_\sigma(f+\varepsilon\varphi)-\mathbb G_\sigma (f)}\varepsilon\leq \frac 1{ \varepsilon }\int_\Omega 
Q_\sigma\left(x,|\nabla u^{f}+\varepsilon \nabla \varphi(x)|\right)-Q_\sigma\left(x, |\nabla u^f(x)|
\right)\text{d}x.
\end{equation}
The second term in \eqref{rapp_incrG} is equal to the second term in \eqref{rapp_incr}, therefore we can pass to the limit and we have
\begin{equation}\label{limsupconv}
\begin{split}
&\limsup_{\varepsilon\to 0} \frac{\mathbb G_\sigma(f+\varepsilon\varphi)-\mathbb G_\sigma (f)}\varepsilon\\
&\leq\lim_{\varepsilon\to 0} \frac 1{ \varepsilon }\int_\Omega Q_\sigma\left(x,|\nabla u^{f}(x)+\varepsilon \nabla \varphi(x)|\right)-Q_\sigma\left(x, |\nabla u^f(x)|\right)\text{d}x\\
&=\int_\Omega  {\sigma}\left( x, \left\vert \nabla u^f(x)\right\vert\right)  \nabla u^{f}(x)\cdot \nabla \varphi (x)\ \text{\emph d}x.
\end{split}
\end{equation}
On the other hand, we have
\begin{equation}\label{lowlim}
\begin{split}
\frac{\mathbb G_\sigma(f+\varepsilon\varphi)-\mathbb G_\sigma(f)}\varepsilon & \geq  \frac 1{ \varepsilon }\int_\Omega Q_\sigma\left(x,|\nabla u^{f+\varepsilon \varphi}(x)|\right)-Q_\sigma\left(x, |\nabla u^{f+\varepsilon\varphi}(x)-\varepsilon \nabla\varphi(x)|\right)\text{d}x.
\end{split}
\end{equation}
In Lemma \ref{lpconv2}, we showed that $\nabla u^{f+\varepsilon\varphi}\to\nabla u^f$ strongly in $L^p(\Omega)$ when $\varepsilon\to 0$. Now, let us consider a sequence $\{\varepsilon_j\}_{j\in\N}$, such that $\varepsilon_j\to 0$ and 
\begin{equation}
    \label{limj}
\liminf_{\varepsilon\to 0^+} \frac{\mathbb G_\sigma(f+\varepsilon\varphi)-\mathbb G_\sigma(f)}\varepsilon=\lim_{j\to +\infty}  \frac{\mathbb G_\sigma(f+\varepsilon_j\varphi)-\mathbb G_\sigma(f)}{\varepsilon_j}.
\end{equation}
By standard arguments, we can say that there exists a subsequence $\{\varepsilon_{j_h}\}_{h\in\N}$ such that  $\nabla u^{f+\varepsilon_{j_h}\varphi}\to\nabla u^f$ a.e. in $\Omega$ and there exists a measurable real function $\psi$, defined on $\Omega$, such that
\[
|\nabla u^{f+\varepsilon_{j_h}\varphi}|\leq\psi,\quad\int_\Omega|\psi(x)|^p\ dx<+\infty.
\]
Now, we consider the integrand function in \eqref{lowlim}. By \eqref{pconv} and (H4), we have
\begin{equation*}
\begin{split}
&\left|\frac{Q_\sigma\left(x,|\nabla u^{f+\varepsilon_{j_h} \varphi}(x)|\right)-Q_\sigma\left(x, |\nabla u^{f+\varepsilon_{j_h}\varphi}(x)-\varepsilon_{j_h} \nabla\varphi(x)|\right)}{\varepsilon_{j_h}}
\right|\\
&\leq \sigma(x,|\nabla u^{f+\varepsilon_{j_h}\varphi}(x)|+|\varepsilon_{j_h}  \nabla\varphi(x)| )( |\nabla u^{f+\varepsilon_{j_h} \varphi}(x)|+|\varepsilon_{j_h} \nabla\varphi(x)|)|\nabla \varphi(x)|\\
& \leq C \max\{\tilde C,(|\nabla u^{f+\varepsilon_{j_h}\varphi}(x)|+|\varepsilon_{j_h}  \nabla\varphi(x)| )^{p-2}\}( |\nabla u^{f+\varepsilon_{j_h}\varphi}(x)|+|\varepsilon_{j_h} \nabla\varphi(x)|)|\nabla\varphi(x)|\\
& \leq C \max\{\tilde C,(\psi(x)+ | \nabla\varphi(x)| )^{p-2}\}( \psi(x)+ |\nabla\varphi(x)|)^2
\\
& \leq C \max\{\tilde C( ( \psi(x)+ |\nabla\varphi(x)|)^2), (\psi(x)+ | \nabla\varphi(x)|)^{p}\},
    \end{split}
\end{equation*}
for a.e. $x \in \Omega$ and for any $\varepsilon<1$. In the above expression $C$ and $\tilde C>0$ are appropriate positive constants independent of $\varepsilon_{j_h}$.

Now, recalling that $Q_\sigma(x, \cdot)$ is in $C^1([0,+\infty[)$ for a.e. $x\in\Omega$, the integrand function in \eqref{lowlim} can be written as 
\begin{equation}\label{rapp_incr_intG}
\begin{split}
&\frac{Q_\sigma\left(x,|\nabla u^{f+\varepsilon\varphi}(x)|\right)-Q_\sigma\left(x, |\nabla u^{f+\varepsilon \varphi}(x)-\varepsilon\nabla \varphi(x)|\right)}\varepsilon\\
&\qquad\qquad=\sigma \left(x,\xi_\varepsilon' \right)\xi_\varepsilon' \frac{ \left|\nabla u^{f+\varepsilon\varphi}(x)\right|-\left|\nabla u^{f+\varepsilon\varphi}(x)-\varepsilon\nabla \varphi(x)\right|}\varepsilon\\
\end{split}
\end{equation}
with
\begin{equation*}
\begin{split}
\xi_\varepsilon'\in &[\min\{\left|\nabla u^{f+\varepsilon \varphi}(x)\right|, \left|\nabla u^{f+\varepsilon \varphi}(x)-\varepsilon \nabla \varphi(x)\right|\},\\ 
&\qquad\qquad\qquad\qquad\qquad\qquad\max\{\left|\nabla u^{f+\varepsilon \varphi}(x)\right|,\left|\nabla u^{f+\varepsilon\varphi}(x)-\varepsilon \nabla \varphi(x)\right|\}],
\end{split}
\end{equation*}
where the dependence of $\xi_\varepsilon'$ upon $x$ is apparent.

The last term in \eqref{rapp_incr_intG} is equal to
\begin{equation}\label{lim_eG}
\begin{sistema}
\sigma \left(x,\xi_\varepsilon' \right)\xi_\varepsilon' \dfrac{2\nabla u^{f+\varepsilon\varphi}(x)\cdot\nabla \varphi (x)-\varepsilon \left|\nabla\varphi(x)\right|^2}{|\nabla u^{f+\varepsilon\varphi}(x)|+|\nabla u^{f+\varepsilon\varphi}(x)-\varepsilon \nabla \varphi(x)|},\quad\text{if}\ |\nabla u^f(x)|\neq 0,\vspace{0.2cm}\\
\sigma \left(x,\xi_\varepsilon' \right)\xi_\varepsilon' \sign(-\varepsilon)|\nabla\varphi (x)|,\qquad\qquad\qquad\qquad\qquad \text{otherwise}.
\end{sistema}
\end{equation}
In the first case, $\xi_\varepsilon'\to |\nabla u^f(x)|$ as $\varepsilon\to 0$ for a.e. $x\in\Omega$; in second the case, $|\nabla u^f(x)|=\lim_{\varepsilon\to 0}\xi_\varepsilon'=0$. We notice that, in both cases, the terms in \eqref{lim_eG} tend to ${\sigma}\left( x, \left\vert \nabla u^f(x)\right\vert\right)  \nabla u^f(x)\cdot \nabla \varphi (x)$, as $\varepsilon\to 0$. 

By applying Dominate Convergence Theorem, we can consider the limit as $\varepsilon_{j_h}\to 0$ in \eqref{lim_eG}, \eqref{rapp_incr_intG} and hence in the second term of \eqref{lowlim}. Hence, by \eqref{lowlim}, \eqref{limj}, \eqref{rapp_incr_intG} and \eqref{lim_eG}, we conclude:
\begin{equation}
\label{liminf_conv}
\begin{split}
&\liminf_{\varepsilon\to 0} \frac{\mathbb G_\sigma(f+\varepsilon\varphi)-\mathbb G_\sigma(f)}\varepsilon=\lim_{j\to +\infty}  \frac{\mathbb G_\sigma(f+\varepsilon_j\varphi)-\mathbb G_\sigma(f)}{\varepsilon_j}\\
&=\lim_{h\to +\infty}  \frac{\mathbb G_\sigma(f+\varepsilon_{j_h}\varphi)-\mathbb G_\sigma(f)}{\varepsilon_{j_h}}\\
&= \lim_{h\to\infty} \frac 1{ \varepsilon_{j_h} }\int_\Omega Q_\sigma\left(x,|\nabla u^{f+\varepsilon_{j_h} \varphi}(x)|\right)-Q_\sigma\left(x, |\nabla u^{f+\varepsilon_{j_h}\varphi}(x)-\varepsilon_{j_h} \nabla\varphi(x)|\right)\text{d}x\\
&= \int_\Omega  {\sigma}\left( x, |\nabla u^f(x)|\right) \nabla u^{f}(x)\cdot\nabla \varphi (x) \text{d}x.
\end{split}
\end{equation}
The conclusion follows by observing that \eqref{limsupconv} and \eqref{liminf_conv} imply \eqref{dG}.
\end{proof}

 \section{Monotonicity Principle}
 \label{MP}

In this section, we demonstrate a Monotonicity Principle for the Dirichlet Energy (Theorem \ref{monoten}), more precisely, 
\begin{equation}\label{m_genergy3}
\sigma_1\leq\sigma_2\quad\Longrightarrow\quad\mathbb F_{\sigma_1}(u_{1}^f) \leq \mathbb F_{\sigma_2} (u_{2}^f)\quad \forall f\in X_\diamond,
\end{equation}
where $u_{i}^f$ is the minimizer of \eqref{gminimum} with $\sigma=\sigma_i$, for $i=1,2$ and $f$ is the applied boundary voltage. We recall that, both in the $p$-Laplacian and in the linear cases, the Dirichlet Energy is proportional to the power product $\langle\Lambda_\sigma(f),f\rangle$ and, therefore, in these cases, Monotonicity Principle for the DtN operator easily follows from (\ref{m_genergy3}). 

In our nonlinear case \eqref{gOhm_potential}, the Dirichlet Energy is not proportional to the power product $\langle\Lambda_\sigma(f),f\rangle$. Rather, the power product $\langle\Lambda_\sigma(f),f\rangle$ is equal to the G\^{a}teaux derivative $\text{ d}\left(\mathbb  F_\sigma \circ \mathbb{U}_\sigma \right) (f;f)$ of the composite mapping $f\rightarrow\mathbb  F_\sigma\left(u^f\right)$ (see Proposition \ref{gateauxprop}). It is a fundamental difference between our analysis and previous ones. Then, starting from this latter equality, we relate the Dirichlet Energy to boundary data through the fundamental relation (see Theorem \ref{transferthm})
\begin{equation*}
\mathbb{F}_{\sigma}\left( u^f\right)=\left\langle\overline{\Lambda}_\sigma  \left( f\right) ,f \right\rangle
\quad\forall f\in X_\diamond.
\end{equation*}
The new operator $\overline{\Lambda}_\sigma$, we call Average DtN, is defined as
\begin{equation}
\label{average_flown}
\overline{\Lambda}_\sigma: f\in X_\diamond\mapsto  \int_{0}^{1}\Lambda_\sigma  \left( \alpha f\right) \text{d}\alpha\in X'_\diamond,
\end{equation}
where
\begin{equation*}
\langle\overline{\Lambda}_\sigma( f),\varphi\rangle=
 \int_{0}^{1}\left\langle\Lambda_\sigma  \left( \alpha f\right) , \varphi\right\rangle\text{d}\alpha
\quad\forall\varphi \in X_\diamond.
\end{equation*} 
Operator $\overline{\Lambda}_\sigma$ gives the average flown of the electrical current density through $\partial \Omega$ for an applied boundary potential of the type $\alpha f$, for $\alpha \in \left[0, 1\right]$. This is the key development for transferring the Monotonicity of the Dirichlet Energy to the boundary data and, in particular, the new operator $\overline\Lambda_\sigma$ replaces for nonlinear problems.

Eventually, in Theorem \ref{monothm}, we prove the Monotonicity Principle for operator $\overline{\Lambda}_\sigma$, i.e.
\[
\sigma_1\leq\sigma_2\quad\Longrightarrow\quad \left\langle\overline{\Lambda}_{\sigma_1}  \left( f\right) ,f \right\rangle
\leq \left\langle\overline{\Lambda}_{\sigma_2}  \left( f\right) ,f \right\rangle \quad\forall f\in X_\diamond,
\]
where we recall that $\sigma_1\leq\sigma_2$ means
\begin{equation}
    \label{defmon}
\sigma_1(x,E)\leq \sigma_2(x,E) \quad \text{for a.e.}\ x\in\overline\Omega\ \text{and}\ \forall\ E>0.
\end{equation}

We remark that Monotonicity Principle involves the knowledge of $\left\langle\overline{\Lambda}_\sigma \left( f\right) ,f \right\rangle=\int_0^1\langle \Lambda_\sigma (\alpha f),f\rangle\  \text{d}\alpha$.
From a physical standpoint, quantity $\langle \Lambda_\sigma (\alpha f),\alpha f\rangle$ is nothing but the electrical power $P(\alpha f)$ absorbed by the conductor when the boundary potential is $\alpha f$ (see Section \ref{theDtNoperator}). Therefore, key quantity  $\left\langle\overline{\Lambda}_\sigma \left( f\right) ,f \right\rangle$ is equal to $\int_0^1 \alpha^{-1} P(\alpha f) \text{d}\alpha$, that is, a weighted integral of the ohmic power dissipated in the conductor.

\subsection{Monotonicity Principle for the Dirichlet Energy} 
Firstly, we state Monotonicity Principle for the Dirichlet Energy in the nonlinear case. This includes the $p$-Laplacian \eqref{pOhm_potential} and linear \eqref{Ohm_potential} cases, where the proof is apparently simpler.
\begin{thm}\label{monoten}
Let $\Omega$ be an open bounded domain in $\R^n$ with Lipschitz boundary and ${\sigma}_1, \sigma_2$ satisfying (H1), (H2), (H3), (H4), (H5). Then, 
\begin{equation*}
\sigma_1\leq\sigma_2\quad\Longrightarrow\quad\mathbb F_{\sigma_1}(u_{1}^f) \leq \mathbb F_{\sigma_2} (u_{2}^f)\quad \forall f\in X_\diamond,
\end{equation*}
where $\sigma_1\leq \sigma_2$ is meant in the sense \eqref{defmon} and $u_{i}^f$ is the minimizer of \eqref{gminimum} with $\sigma=\sigma_i$, for $i=1,2$.
\end{thm}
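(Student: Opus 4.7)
The proof is a short two-step chain that exploits the variational characterization \eqref{gminimum} of $u_i^f$ together with the pointwise monotonicity of the energy density $Q_\sigma$ in $\sigma$. The plan is to interpolate through the mixed quantity $\mathbb{F}_{\sigma_1}(u_2^f)$ and to show
\[
\mathbb{F}_{\sigma_1}(u_1^f) \;\le\; \mathbb{F}_{\sigma_1}(u_2^f) \;\le\; \mathbb{F}_{\sigma_2}(u_2^f).
\]

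For the first inequality I would simply note that by \eqref{gminimum} the function $u_1^f$ is the minimizer of $\mathbb{F}_{\sigma_1}$ over the admissible class $\{u\in W^{1,p}(\Omega):u|_{\partial\Omega}=f\}$. Since $u_2^f$ belongs to this same admissible class (it also has trace $f$ on $\partial\Omega$), it is a competitor and the minimality of $u_1^f$ gives $\mathbb{F}_{\sigma_1}(u_1^f)\le \mathbb{F}_{\sigma_1}(u_2^f)$.

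For the second inequality I would go back to the definition \eqref{gQ} of the energy density: for a.e.\ $x\in\overline\Omega$ and every $E\ge 0$,
\[
Q_{\sigma_1}(x,E)=\int_0^E \sigma_1(x,\xi)\,\xi\,\mathrm d\xi \;\le\; \int_0^E \sigma_2(x,\xi)\,\xi\,\mathrm d\xi = Q_{\sigma_2}(x,E),
\]
where the inequality uses the hypothesis $\sigma_1(x,\xi)\le\sigma_2(x,\xi)$ in the sense \eqref{defmon} together with $\xi\ge 0$. Integrating this pointwise bound with the choice $E=|\nabla u_2^f(x)|$ over $\Omega$ yields $\mathbb{F}_{\sigma_1}(u_2^f)\le \mathbb{F}_{\sigma_2}(u_2^f)$.

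Concatenating the two inequalities completes the proof. There is essentially no obstacle here: the argument is robust and does not need (H3), (H4) or (H5) — only the existence of the minimizers (which is guaranteed by the well-posedness discussion in Section \ref{wpfp}) and the monotonicity of $\xi\mapsto\sigma(x,\xi)\xi$ in the energy-density integrand. Note in particular that one does \emph{not} try to compare $u_1^f$ and $u_2^f$ directly; the strength of the argument is that it compares each solution against the \emph{other problem's} admissible set, sidestepping any need for estimates on the difference of the two nonlinear PDEs.
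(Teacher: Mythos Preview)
Your proof is correct and essentially identical to the paper's own argument: the paper also interpolates through $\mathbb{F}_{\sigma_1}(u_2^f)$, using the minimality of $u_1^f$ for the first inequality and the pointwise bound $Q_{\sigma_1}(x,E)\le Q_{\sigma_2}(x,E)$ (obtained from $\sigma_1\le\sigma_2$ inside the integral \eqref{gQ}) for the second. Your remark that only existence of minimizers is really needed---not the full strength of (H3)--(H5)---is a valid observation that the paper does not make explicit.
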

\begin{proof}
Since $u_{2}^f$ is an admissible function for problem \eqref{gminimum} with $\sigma=\sigma_1$, we have
\begin{equation*}
\begin{split}
\mathbb{F}_{\sigma_1}( u_{1}^f)& \leq \mathbb{F}_{\sigma_1}(u_{2}^f)  =\int_\Omega Q_{\sigma_1} (x,|\nabla u_{2}^f(x)|)\ \text{d}x\\
& = \int_\Omega \int_{0}^{|\nabla u_{2}^f(x)|}\sigma_1\left( x,\xi\right) \xi\ \text{d}\xi\ \text{d}x \leq \int_\Omega \int_{0}^{|\nabla u_{2}^f(x)|} \sigma_2\left( x,\xi\right)\xi\  \text{d}\xi\ \text{d}x= \mathbb F_{\sigma_2}(u_{2}^f),
\end{split}
\end{equation*}
where the second inequality follows from the assumption $\sigma_1\leq\sigma_2$.
\end{proof}

\subsection{Connection between Dirichlet Energy and DtN operator}\label{connection}
The motivation for our research is in generalizing the Monotonicity Principle from linear and $p$-Laplacian to more general cases. The first step in this direction is to study the polynomial type nonlinear constitutive relationship. For this, let us consider
\[
\sigma(x, E)=\sum_{k=0}^N\theta _{k}\left( {x}\right) {E}^{k}\quad \text{for a.e.} \ x\in\overline\Omega\ \text{and}\ \forall E>0,  
\]
where either $\theta_k\in L_+^\infty(\Omega)$ or $\theta_k\equiv0$ in $\overline\Omega$ and $N\in\N$.
This polynomial type nonlinearity leads to minimization problem \eqref{gminimum} where $p=N+2$, $u\in W^{1, N+2}(\Omega)$, $u|_{\partial\Omega}=f\in X_\diamond$ and 
\begin{equation*}
\mathbb F_\sigma (u)= \sum_{k=0}^N\int_\Omega\left( \frac{\theta _{k}(x)}{k+2}|\nabla u (x)|^{k+2}\right) \text{d} x.
\end{equation*}
Furthermore, we have
\begin{equation}\label{poly_formDtN}
\left\langle \Lambda_\sigma \left( f\right),f\right\rangle=
\sum_{k=0}^N \int_{\Omega }\theta _{k}(x)\left\vert \nabla u (x)\right\vert ^{k+2}\text{d}x\quad \forall f\in X_\diamond,
\end{equation}
and, consequently,
\begin{equation*}\mathbb F_\sigma\left( u^f\right) \neq\left\langle \Lambda_\sigma \left( f\right),f\right\rangle\text{ in }  X_\diamond.
\end{equation*}
This is a major issue because it prevents to transfer the monotonic connection of electrical conductivity with the boundary data. 

Here, we investigate the issue which derives the proportionality between Dirichlet Energy and the power product \eqref{poly_formDtN}. This proportionality holds only for some special case of nonlinearity.  For a general nonlinear constitutive relationship satisfying (H1)-(H5), the Dirichlet Energy \eqref{genergy} and the 
ohmic power \eqref{g_formDtN} are proportional if and only if there exists $c>0$ such that 
\begin{equation}
\label{proportionality}
\left\langle \Lambda_\sigma  \left( f\right),f \right\rangle=c\mathbb F_\sigma \left( u^f\right)\quad \forall f\in X_\diamond,
\end{equation}
 that is
\begin{equation}
\label{proportionalityDD}
\int_{\Omega }\left[ J_\sigma(x, E(x))E(x)-c\int_{0}^{E(x)}J_\sigma \left( x, \xi \right) \text{d}\xi \right] 
\text{d}x=0,\quad \forall f\in X_\diamond.
\end{equation}%
A sufficient condition for \eqref{proportionalityDD} is%
\begin{equation*}
J_\sigma \left(\cdot, E \right) E=c\int_{0}^{E }J_\sigma
\left( \cdot,\xi \right) \text{d}\xi,
\end{equation*}%
that is%
\begin{equation*}
J_\sigma^{\prime }(\cdot, E)E =\left( c-1\right) J_\sigma(\cdot, E) .
\end{equation*}%
This latter ODE gives $J_\sigma \left(\cdot, E \right) =aE^{c-1}$, with $a\in\R$ i.e.
$\sigma \left(\cdot, E \right) =aE ^{c-2}$. 
Hence the proportionality \eqref{proportionality} is not expected for general nonlinear constitutive relationships except for monomial type nonlinearities.

\subsection{The power product for the average DtN 
operator}
We prove that the Dirichlet Energy \eqref{genergy} is transferred to a boundary measurement involving $\overline{\Lambda}_\sigma$, i.e. to the power product $\left\langle\overline{\Lambda}_\sigma \left( f\right) ,f \right\rangle$. 
\begin{thm}\label{transferthm}
Let $\Omega$ be an open bounded domain in $\R^n$ with Lipschitz boundary and ${\sigma}$ satisfying (H1), (H2), (H3), (H4), (H5). Then
\begin{equation*}\mathbb{F}_{\sigma}\left( u^f\right)=\left\langle\overline{\Lambda}_\sigma \left( f\right) ,f \right\rangle
\quad \forall f\in X_\diamond,
\end{equation*}
where $u^f$ is the minimizer of \eqref{gminimum}.
\end{thm}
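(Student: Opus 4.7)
The plan is to reduce the identity to the one-variable fundamental theorem of calculus applied to the scalar function
\[
g:[0,1]\to\R,\qquad g(\alpha)=(\mathbb F_\sigma\circ\mathbb U_\sigma)(\alpha f).
\]
The key observation is that, by Proposition \ref{gateauxprop}, $\langle\Lambda_\sigma(\alpha f),f\rangle$ is precisely the G\^{a}teaux derivative of $\mathbb F_\sigma\circ\mathbb U_\sigma$ at $\alpha f$ in the direction $f$, i.e. $g'(\alpha)$. Hence, if FTC can be applied on $[0,1]$,
\[
\langle\overline\Lambda_\sigma(f),f\rangle=\int_0^1\langle\Lambda_\sigma(\alpha f),f\rangle\,\text{d}\alpha=\int_0^1 g'(\alpha)\,\text{d}\alpha=g(1)-g(0).
\]

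For the boundary values of $g$, I would first note that $g(1)=\mathbb F_\sigma(u^f)$ by construction, while $g(0)=0$: indeed, the constant function $0$ is an admissible competitor in \eqref{gminimum} with boundary datum $0\in X_\diamond$, it has zero Dirichlet Energy (since $Q_\sigma(x,0)=0$), and by the coercivity in (H4) together with the strict convexity of $Q_\sigma(x,\cdot)$ coming from (H2), this is the unique minimizer, so $\mathbb U_\sigma(0)=0$ and $\mathbb F_\sigma(0)=0$.

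The technical heart of the argument is justifying FTC for $g$. The plan is to show that $g$ is Lipschitz on $[0,1]$, so that it is absolutely continuous and equals the integral of its (a.e.\ defined) derivative. Differentiability of $g$ at every $\alpha\in[0,1]$ with $g'(\alpha)=\langle\Lambda_\sigma(\alpha f),f\rangle$ follows directly from Proposition \ref{gateauxprop} applied at the point $\alpha f\in X_\diamond$ in direction $f\in X_\diamond$. To bound $g'$ uniformly in $\alpha\in[0,1]$, I would estimate
\[
|g'(\alpha)|=\Big|\int_\Omega\sigma(x,|\nabla u^{\alpha f}|)\nabla u^{\alpha f}\cdot\nabla f\,\text{d}x\Big|
\]
using (H4) and H\"older's inequality, exactly as in the upper-bound computation of Lemma \ref{lpconv2}; the only quantity to control is $\|\nabla u^{\alpha f}\|_p$, which is uniformly bounded on $[0,1]$ because $u^{\alpha f}$ minimizes $\mathbb F_\sigma$ against the admissible competitor $\alpha f$ (or any smooth extension of its trace), so by (H4) the norm $\|\nabla u^{\alpha f}\|_p$ is controlled by $\|\nabla f\|_p$ plus a constant.

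The main obstacle is this absolute-continuity step: although the pointwise derivative of $g$ is given cleanly by Proposition \ref{gateauxprop}, $g$ is defined on an infinite-dimensional path $\alpha\mapsto\alpha f$ composed with the nonlinear solution operator $\mathbb U_\sigma$, so continuity and differentiability along this line must be extracted from the global estimates. As an alternative cleaner route, one can observe that Lemma \ref{lpconv2} (with $f$ replaced by $\alpha f$ and $\varphi$ by $f$) gives $\nabla u^{(\alpha+\eps)f}\to\nabla u^{\alpha f}$ in $L^p(\Omega)$ as $\eps\to 0$, which combined with the growth bounds (H4) and dominated convergence shows that $\alpha\mapsto g'(\alpha)=\langle\Lambda_\sigma(\alpha f),f\rangle$ is continuous on $[0,1]$. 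Then $g\in C^1([0,1])$ and FTC applies verbatim, yielding
\[
\mathbb F_\sigma(u^f)=g(1)-g(0)=\int_0^1\langle\Lambda_\sigma(\alpha f),f\rangle\,\text{d}\alpha=\langle\overline\Lambda_\sigma(f),f\rangle,
\]
as required.
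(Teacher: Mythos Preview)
Your proposal is correct and follows essentially the same approach as the paper: define $g(\alpha)=(\mathbb F_\sigma\circ\mathbb U_\sigma)(\alpha f)$, invoke Proposition \ref{gateauxprop} to identify $g'(\alpha)=\langle\Lambda_\sigma(\alpha f),f\rangle$, and integrate over $[0,1]$. The paper's own proof is in fact terser than yours---it writes $g(1)=\int_0^1 g'(\alpha)\,\text{d}\alpha$ directly, leaving implicit both the value $g(0)=0$ and the integrability/FTC justification that you spell out via the uniform bound on $g'$ (or the $C^1$ route through Lemma \ref{lpconv2}); your added care on these points is appropriate and does not deviate from the intended argument.
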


\begin{proof}
For $\alpha\in [0,1]$
, we set \[
g(\alpha):=(\mathbb F_\sigma\circ\mathbb{U_\sigma})  (\alpha f).
\]
By Proposition \ref{gateauxprop}, since $\mathbb F_\sigma\circ\mathbb{U_\sigma}$ is G\^{a}teaux-differentiable, $g$ is differentiable.  By replacing $f$ and $\varphi$ with $\alpha f$ and $f$, we have 
\[
g'(\alpha)=\text{d}(\mathbb F_\sigma\circ\mathbb{U_\sigma})(\alpha f;f)=\langle \Lambda_\sigma (\alpha f),f\rangle.
\]
Eventually, by integrating over the interval $\left[0, 1\right]$, we obtain
\begin{equation*}
\mathbb{F}_\sigma (u^f)=(\mathbb F_\sigma\circ\mathbb{U_\sigma})  (f)=g(1)=\int_0^1 g'(\alpha)\text{d}\alpha=\int_0^1\langle \Lambda_\sigma (\alpha f),f\rangle\  \text{d}\alpha=  \left\langle\overline{\Lambda}_\sigma \left( f\right) ,f \right\rangle.
\end{equation*}
\end{proof}

\subsection{Monotonicity Principle for $\overline{\Lambda}_\sigma$}
The main result of this section is to derive Monotonicity Principle for the operator \eqref{average_flown}.
\begin{thm}\label{monothm}
Let $\Omega$ be an open bounded domain in $\R^n$ with Lipschitz boundary and ${\sigma}_1, \sigma_2$ satisfying (H1), (H2), (H3), (H4), (H5). Then
\begin{equation}
\label{m_charge}
\sigma_1\leq\sigma_2\quad\Longrightarrow\quad \left\langle\overline{\Lambda}_{\sigma_1}  \left( f\right) ,f \right\rangle
\leq \left\langle\overline{\Lambda}_{\sigma_2}  \left( f\right) ,f \right\rangle
\quad \forall f\in X_\diamond,
\end{equation}
where $\sigma_1\leq \sigma_2$ is meant in the sense \eqref{defmon}.
\end{thm}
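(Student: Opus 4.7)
The plan is to obtain the statement as an immediate consequence of the two main ingredients already established in this section, namely Theorem \ref{monoten} (Monotonicity Principle for the Dirichlet Energy) and Theorem \ref{transferthm} (the transfer identity relating the Dirichlet Energy to the Average DtN power product). Since the excerpt has already done the hard analytical work, the role of this theorem is simply to package the previous results into a statement that involves only boundary quantities.

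First I would fix an arbitrary $f\in X_\diamond$ and let $u_1^f, u_2^f \in W^{1,p}(\Omega)$ denote the minimizers of \eqref{gminimum} corresponding to $\sigma_1$ and $\sigma_2$, respectively, with boundary datum $f$. Both minimizers exist and are unique by the well-posedness discussion in Section \ref{wpfp}, since each $\sigma_i$ satisfies (H1)--(H5). Applying Theorem \ref{transferthm} to $\sigma=\sigma_1$ and $\sigma=\sigma_2$ separately yields
\begin{equation*}
\langle \overline{\Lambda}_{\sigma_1}(f), f\rangle = \mathbb{F}_{\sigma_1}(u_1^f), \qquad \langle \overline{\Lambda}_{\sigma_2}(f), f\rangle = \mathbb{F}_{\sigma_2}(u_2^f).
\end{equation*}
Next, I would invoke Theorem \ref{monoten}, whose hypotheses are exactly those of the present theorem, to conclude that $\sigma_1 \leq \sigma_2$ in the sense of \eqref{defmon} implies $\mathbb{F}_{\sigma_1}(u_1^f)\leq \mathbb{F}_{\sigma_2}(u_2^f)$. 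Chaining these three relations delivers \eqref{m_charge}, and since $f\in X_\diamond$ was arbitrary, the proof is complete.

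There is essentially no technical obstacle to overcome at this stage: the Monotonicity of $\overline{\Lambda}_\sigma$ is the payoff of the two preceding theorems and does not require any additional estimate or regularity result. If anything, the only point worth a brief comment in the write-up is the \emph{reason} why we must go through $\overline{\Lambda}_\sigma$ rather than the plain DtN operator $\Lambda_\sigma$: outside the linear and $p$-Laplacian regimes, $\mathbb{F}_\sigma(u^f)$ is not proportional to $\langle \Lambda_\sigma(f),f\rangle$ (see Subsection \ref{connection}), so the Dirichlet Energy cannot be transferred to the standard boundary power product. The averaging $\int_0^1 \Lambda_\sigma(\alpha f)\,\mathrm{d}\alpha$ is precisely what reconstructs the energy via the fundamental theorem of calculus applied to $\alpha \mapsto (\mathbb{F}_\sigma \circ \mathbb{U}_\sigma)(\alpha f)$, as exploited in the proof of Theorem \ref{transferthm}.
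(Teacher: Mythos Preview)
Your proposal is correct and follows exactly the same route as the paper: apply Theorem \ref{transferthm} to rewrite $\langle\overline{\Lambda}_{\sigma_i}(f),f\rangle$ as the Dirichlet Energy $\mathbb{F}_{\sigma_i}(u_i^f)$, then invoke Theorem \ref{monoten} to compare the two energies. The additional commentary you include about why $\overline{\Lambda}_\sigma$ must replace $\Lambda_\sigma$ is accurate and in the spirit of the surrounding discussion, though the paper's own proof omits it at this point.
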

\begin{proof}
Let $u_{i}^f$ be the unique solution of problem \eqref{gminimum} with $\sigma=\sigma_i$, $i=1,2$, and boundary value $f$.
By Theorems \ref{monoten} and \ref{transferthm}, we have
\begin{equation*}
\begin{split}
\left\langle\overline{\Lambda}_{\sigma_1}  \left( f\right) ,f \right\rangle= 
\mathbb{F}_{\sigma_1}( u^f_{1}) \leq
 \mathbb{F}_{\sigma_2}( u^f_{2})  =
 \left\langle\overline{\Lambda}_{\sigma_2}  \left( f\right) ,f \right\rangle
\end{split}
\end{equation*}
that proves \eqref{m_charge}. 
\end{proof}

If the constitutive relationship $J_\sigma=J_\sigma \left(x,E\right)$ is strictly decreasing with respect to $E$, then the Monotonicity is reversed, i.e.
\begin{equation*}
\sigma_1\leq\sigma_2\quad\Longrightarrow\quad \left\langle\overline{\Lambda}_{\sigma_1}  \left( f\right) ,f \right\rangle
\geq \left\langle\overline{\Lambda}_{\sigma_2}  \left( f\right) ,f \right\rangle\quad\forall f\in X_\diamond.
\end{equation*}
We stress that Theorem \ref{monothm} generalizes both the $p$-Laplacian and the linear cases. Indeed, in such cases, we have 
\begin{equation*}
\begin{split}
\left\langle   \Lambda _{\sigma_1 }\left( f\right) ,f \right\rangle=
p\left\langle\overline{\Lambda}_{\sigma_1}  \left( f\right) ,f \right\rangle \leq
p\left\langle\overline{\Lambda}_{\sigma_2}  \left( f\right) ,f \right\rangle= 
\left\langle \Lambda _{\sigma_2 }\left( f\right) , f \right\rangle \quad\forall f\in X_\diamond.
\end{split}
\end{equation*}

\section{Conclusions}\label{Concl}
In this paper, we proved Monotonicity Principle for inverse electrical conductivity problems where the conductor is modeled by a fully nonlinear constitutive relationship. Namely, we proved Monotonicity Principle where the \lq\lq standard\rq\rq\! DtN operator $\Lambda_\sigma$, required for $p$-Laplacian and linear cases, is replaced by the Average DtN operator $\overline{\Lambda}_\sigma$.
Specifically, we unveiled the \lq\lq mechanics\rq\rq of Monotonicity
by first recognizing that $\mathbb F_\sigma\circ\mathbb{U_\sigma}$ is monotonic with respect to the material property $\sigma$, even in the nonlinear case and, then, by transferring this Monotonicity to the new boundary operator $\overline{\Lambda}_\sigma$. This is a relevant result because, apart from linear and $p$-Laplacian cases, it is impossible to transfer the Monotonicity from $\mathbb F_\sigma\circ\mathbb{U_\sigma}$ to the DtN operator $\Lambda_\sigma$. This result is based on the fundamental relation $\text{d} (\mathbb F_\sigma\circ\mathbb{U_\sigma})=\Lambda_\sigma$ proved in Proposition \ref{gateauxprop}.  
During this analysis, we proved two general results: 
(i) the virtual power product $\langle\Lambda_\sigma (f),\varphi\rangle$ is equal to $\text{d} \mathbb F_\sigma(u^f;\varphi)$, that is the G\^{a}teaux derivative of the Dirichlet Energy $\mathbb{F}_\sigma$, evaluated at the solution $u^f$ in the direction of $f$, and (ii) the power product $\langle\Lambda_\sigma (f),f\rangle$ is equal to $\text{d} (\mathbb F_\sigma\circ\mathbb{U_\sigma})(f;f)$, that is, the G\^{a}teaux derivative of the composed function $\mathbb{F}_\sigma\circ \mathbb U_\sigma$, evaluated at $f$ in the direction of $f$.

Future work will deal with the adaptation of Monotonicity Principle in an inversion or imaging method for nonlinear materials.

\section*{Acknowledgements}
This work has been partially supported by the MiUR-Progetto Dipartimenti di eccellenza grant \lq\lq Sistemi distribuiti intelligenti\rq\rq of Dipartimento di Ingegneria Elettrica e dell'Informazione \lq\lq M. Scarano\rq\rq, by MiSE project \lq\lq SUMMa: Smart Urban Mobility
Management\rq\rq and by GNAMPA of INdAM.

\bibliographystyle
{iopart-num}
\bibliography{biblioCFPPT}
\end{document}